\providecommand{\U}[1]{\protect\rule{.1in}{.1in}}
\newtheorem{theorem}{Theorem}
\newtheorem{proposition}{Proposition}
\newtheorem{lemma}[proposition]{Lemma}
\theoremstyle{definition}
\newtheorem{definition}[proposition]{Definition}
\theoremstyle{remark}
\newtheorem{remark}{Remark}
\newtheorem{example}{Example}
\newcommand{\commentaire}[1]{}
\newcommand{\N}{\mathbb N}
\newcommand{\Q}{\mathbb Q}
\newcommand{\R}{\mathbb R}
\newcommand{\Z}{\mathbb Z}
\newcommand{\ttt}{\theta}
\newcommand{\LL}{\Lambda}
\newcommand{\fff}{\rightarrow}
\newcommand{\sgn}{\operatorname{sgn}}
\newcommand{\dd}{\operatorname{d}}
\newcommand{\card}{\operatorname{card}}
\newcommand{\eps}{\varepsilon}
\begin{document}
\title{The natural extension of the Gauss map and Hermite best approximations}
\maketitle

\begin{abstract}
	Hermite best approximation vectors of a real number $\ttt$ were introduced by Lagarias. A nonzero vector $(p,q)\in \Z\times\N$ is a Hermite best approximation vector of $\ttt$ if there exists $\Delta>0$ such that $(p-q\ttt)^2+q^2/\Delta\leq (a-b\ttt)^2+b^2/\Delta$ for all nonzero $(a,b)\in \Z^2$. Hermite observed that if $q>0$ then the fraction $p/q$ must be a convergent of the continued fraction expansion of $\ttt$ and Lagarias pointed out that some convergents are not  associated with a Hermite best approximation vectors. In this note we show that the almost sure proportion of Hermite best approximation vectors among convergents is $\ln 3/\ln 4$. The main tool of the proof is the natural extension of the Gauss map $x\in]0,1[\fff\{1/x\}$.
\end{abstract}

\section{Introduction}

In 1850, Hermite observed that  the  fractions $\tfrac{p_{\Delta}}{q_{\Delta}}$ associated with the minima $(p_{\Delta},q_{\Delta})\in\Z\times\N\setminus\{0\}$  of the quadratic forms
\[
f_{\Delta}(p,q)=(p-q\ttt)^2+\frac{q^2}{\Delta}, \,\Delta>0,
\] 
 are all convergents of the continued fraction expansion of the real number $\ttt$ (see \cite{Her}, p.295). However, some convergents are skipped. This was observed by Lagarias in \cite{Lag1} where he defined the Hermite best  approximations vectors of $\ttt$ as the pairs $(p,q)\in\Z\times\N$ that minimize  the quadratic forms $f_{\Delta}$ on $\Z^2\setminus\{(0,0)\}$ for at least one $\Delta>0$. The main objective of Lagarias' work was to define and to study the multidimensional Minkowski geodesic continued fraction algorithm. 
 In \cite{GrLa}, Grabiner and Lagarias studied the deep relationships between the one-dimensional Minkowski geodesic continued fraction algorithm, the additive and multiplicative continued  fraction algorithm, and the cutting sequences of the geodesic flow in the hyperbolic plane. The main objective of this note is to study the almost sure proportion of Hermite best approximation vectors among convergents of real numbers. 
 Here, we shall talk of best approximation vectors $(p_n,q_n)$ rather than convergents $\tfrac{p_n}{q_n}$. The only slight difference is that the convergent $\tfrac{p_0}{q_0}$ is skipped when the fractional part of $\ttt$ is $>\tfrac12$. We shall prove  
 
 \begin{theorem}
 	Let $\ttt$ be in $\R$ and let $(p_n,q_n)_{n\geq0}$ be the sequence of best approximation vectors  of $\ttt$.
 	\begin{enumerate}
 		\item For all $n\geq 0$, one at least of the best approximation vectors $(p_n,q_n)$ and $(p_{n+1},q_{n+1})$ is a Hermite best approximation.
 		\item For almost all $\ttt\in\R$,
 		\[
 		\lim_{n\fff\infty} \frac1n \card\{0\leq k<n: (p_k,q_k) \text{ is a Hermite best  approximation vector}\}=\frac{\ln 3}{\ln 4}.
 		\]
 		\item Let $(g_n,h_n)$ be the $n$-th Hermite best  approximation vector of $\ttt$. Then 
 		\[
 		\lim_{n\fff\infty}\frac1n \ln h_n=\frac{\pi^2}{6\ln 3}
 		\] 
 		for almost all $\ttt\in \R$.
 	\end{enumerate}
 \end{theorem}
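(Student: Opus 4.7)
The plan is to lift the Hermite property to the natural extension of the Gauss map and then apply Birkhoff's theorem. Set $\tilde T(x,y)=(\{1/x\},1/(\lfloor 1/x\rfloor+y))$ on $[0,1]^2$, which preserves the probability measure $d\mu=(\ln 2)^{-1}(1+xy)^{-2}\,dx\,dy$, and let $(x_n,y_n)=\tilde T^n(\{\ttt\},0)$, so that $x_n$ is the $n$-th Gauss iterate of $\{\ttt\}$ and $y_n=q_{n-1}/q_n$. From the classical identities $\eps_n:=p_n-q_n\ttt=(-1)^n x_n/(q_n(1+x_ny_n))$ and $q_{n+1}=q_n/y_{n+1}$, the minimum envelope of the functions $\Delta\mapsto \eps_k^2+q_k^2/\Delta$ is the lower-left convex hull of the (monotone) planar points $(q_k^2,\eps_k^2)$, so $(p_n,q_n)$ is a Hermite best approximation iff it beats its two neighbors for some $\Delta$. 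After expanding using the natural-extension identity $x(1+x'y')=y'(1+xy)$, this condition becomes
\[
(x_n y_{n+1})^2(1-y_n^2)(1-x_{n+1}^2)<(1-x_n^2)(1-y_{n+1}^2).
\]
Let $H\subset[0,1]^2$ denote the corresponding region, with $(x',y')=\tilde T(x,y)$.

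Next I would make $H^c$ explicit. Setting $a=\lfloor 1/x\rfloor$, if $a\geq 2$ then $xy'\leq 1/a^2\leq 1/4$ while $\min(1-x^2,1-y'^2)\geq 3/4$, so the inequality holds automatically and $H^c\subset\{a=1\}=\{x>1/2\}$. For $a=1$, substituting $x'=(1-x)/x$ and $y'=1/(1+y)$ reduces the condition to $(2x-1)/(1-x^2)<y(y+2)/(1-y^2)$, and solving the resulting quadratic in $y$ yields
\[
H^c=\bigl\{(x,y):1/2<x<1,\ 0\leq y\leq(2x-1)/(2-x)\bigr\}.
\]
For part (1) I then verify $\tilde T(H^c)\subseteq H$: given $(x,y)\in H^c$, if $x\geq 2/3$ then $x'=1/x-1\leq 1/2$ and the image lies in $\{a'\geq 2\}\subset H$ trivially; otherwise $x\in(1/2,2/3)$ and $y'=1/(1+y)\geq(2-x)/(1+x)$, while a direct computation gives the boundary value $y^*(x')=(2-3x)/(3x-1)$, so the required $y'>y^*(x')$ reduces to $(2-x)(3x-1)>(2-3x)(1+x)$, i.e.\ $8x>4$.

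For part (2), $(\tilde T,\mu)$ is ergodic (the non-generic initial condition $y_0=0$ is handled by the standard observation that the $y$-coordinate loses memory exponentially), so Birkhoff's theorem yields $n^{-1}\#\{k<n:(x_k,y_k)\in H\}\to\mu(H)$ almost surely. I compute $\mu(H^c)$ explicitly: the inner $y$-integral is
\[
\int_0^{(2x-1)/(2-x)}\frac{dy}{(1+xy)^2}=\frac{2x-1}{2(x^2-x+1)},
\]
whose numerator is exactly $(x^2-x+1)'$; hence the outer $x$-integral on $(1/2,1)$ equals $\tfrac12\ln(4/3)$, giving $\mu(H^c)=\ln(4/3)/(2\ln 2)$ and $\mu(H)=\ln 3/\ln 4$. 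Part (3) follows at once from part (2) and L\'evy's theorem $n^{-1}\ln q_n\to\pi^2/(12\ln 2)$: the $n$-th Hermite vector is the $(n\ln 4/\ln 3+o(n))$-th best approximation, so $n^{-1}\ln h_n\to(\ln 4/\ln 3)\cdot\pi^2/(12\ln 2)=\pi^2/(6\ln 3)$. The main obstacle is the opening step---distilling the Hermite condition into the clean inequality above---since it depends on carefully combining the identities for $\eps_n$, $q_n$, and the $\tilde T$-Jacobian; once this is in place, the geometry of $H^c$ and the two ergodic/integration steps are straightforward.
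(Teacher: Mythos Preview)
Your approach is essentially the paper's: both use the natural extension $\tilde T$, identify the same ``bad'' region (your $H^c=\{1/2<x<1,\ y<(2x-1)/(2-x)\}$ is exactly the paper's $V=\{x>(2y+1)/(y+2)\}$), compute its $\mu$-measure to get $\ln 3/\ln 4$, and then deduce (3) from (2) and L\'evy. Your integral and your verification that $\tilde T(H^c)\subset H$ are both correct.

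The genuine gap is in the assertion ``$(p_n,q_n)$ is a Hermite best approximation iff it beats its two neighbors for some $\Delta$''. The forward implication is immediate, but the reverse is not a general fact about lower convex hulls: a point of a monotone staircase can lie strictly below the chord of its two immediate neighbors and still fail to be a vertex of the full hull (for instance $(0,4),(1,3.4),(2,3),(3,0)$: the second point beats both neighbors yet lies above the segment from $(0,4)$ to $(3,0)$). So ``local pass'' does not automatically give ``global minimum for some $\Delta$''. This affects both (1) and (2): your inclusion $\tilde T(H^c)\subset H$ shows only that two consecutive \emph{local} failures are impossible, which does not by itself rule out two consecutive non-Hermite convergents; and $\mu(H)$ is a priori only an upper bound for the Hermite density.

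The paper closes exactly this gap by a lattice-geometric argument (its Proposition~\ref{prop:hermite}). Using a lemma that locates the next Hermite vector after a given one and produces a norm $|\cdot|_t$ for which both are simultaneously shortest, it proves directly that whenever $u$ is Hermite and $v$ is not, the partial quotient equals $1$ and $w=u+v$ is again Hermite. From this one obtains both that no two consecutive convergents can be non-Hermite (part (1)) and, crucially, that ``$v$ not Hermite'' is \emph{equivalent} to the local inequality you wrote down. That equivalence---rather than the algebra converting the two-neighbor condition into $(x,y)$-coordinates---is the real crux you are skipping.
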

  
The main ingredient of the proof of Theorem 1 is the natural extension of the Gauss map $x\in]0,1[\fff\{\tfrac1x\}$. The second objective of this note is to introduce the natural extension of the Gauss map starting from minimal vectors in lattices. There are many ways to introduce the natural extension of the Gauss map, see for instance see \cite{Na,ArNo,Sch} and,   
although the idea of minimal vectors  goes back to Voronoï (\cite{Vor}), it seems that their use for the natural extension of the Gauss map is not so well known, a use I learned from Yitwah Cheung. Recently, Yi Han, a student of Cheung, did a senior thesis where the same approach is explained with emphasis on the role of the diagonal flow, see \cite{Han}. 
 
The note is organize as follows, we first define minimal vectors in lattices of $\R^2$ and consecutive minimal vectors of these lattices. Then, we describe the algorithm that computes the minimal vector that immediately follows a pair of consecutive minimal vectors, this leads  to the definition of the natural extension of the Gauss map. Afterward, we state and prove all the results about the natural extension that are needed to prove Theorem 1, even those that  are well known. Among these intermediate results, Proposition \ref{prop:hermite} characterizes Hermite vectors with the natural extension.  Finally we prove Theorem 1. 

\section{Minimal vectors in lattices of $\R^2$}
{\bf Notation.} 
The box $B(a,b)$ is the set of vectors $(x,y)\in\R^2$ such that $|x|\leq a$ and $|y|\leq b$. 
When $u=(u_1,u_2)$ and $v=(v_1,v_2)$ are in $\R^2$, the box $B(u)$ is defined by $B(u)=B(|u_1|,|u_2|)$ and  the box $B(u,v)$ is defined by $B(u,v)=B(\max(|u_1|,|v_1|),\max(|u_2|,|v_2|))$.
\begin{definition}
	Let $\Lambda$ be a  lattice in $\mathbb{R}^{2}$.
	
	\begin{itemize}
		\item A nonzero vector $u=(u_{1},u_{2})\in\Lambda$ is a \textit{minimal}
		vector in $\Lambda$ if for every nonzero $v\in\Lambda$, $v\in B(u)\Rightarrow
		|v_{1}|=|u_{1}|$ and $|v_{2}|=|u_{2}|$.
		\item Two minimal vectors $u=(u_{1},u_{2})$ and $v=(v_{1},v_{2})$  are 
		\textit{consecutive} if $|u_2|<|v_2|$ and there are no minimal vector $w=(w_1,w_2)$ such that $|u_{2}|<|w_2|<|v_{2}|$.
		\item A sequence $X_n=(x_n,y_n)$, $n\in I$, is a \textit{complete} sequence of minimal vectors in $\LL$ if 
		 \begin{itemize}
		 	\item $I\subset \Z$ is an interval,
			\item  for all $n\in I$, $X_n$ is a minimal vector in $\LL$,
			\item for all $n\in I$ such that $n+1\in I$, $|y_{n+1}|>|y_n|$, 
			\item  for all minimal vectors $u=(x,y)$, there exists $n\in I$ such that $|y_n|=|y|$.
		\end{itemize}    
	\end{itemize}
\end{definition}

\begin{example}
	When 
	\[
	\LL_{\ttt}=\begin{pmatrix}
		1&-\ttt \\
		0&1
	\end{pmatrix}\Z^2=M_{\ttt}\Z^2.
	\]
	where $\ttt\in\R$, the vectors $(1,0)$ and $(-\ttt',1)$ with $\ttt'\in[-\frac12,\frac12]\cap(\ttt+\Z)$, are always consecutive minimal vectors in $\LL_{\ttt}$. 
\end{example}
\begin{remark}
	If $X_n$ and $X_{n+1}$ are two elements of a complete sequence of minimal vectors of a lattice, they are consecutive minimal vectors.
\end{remark}
\begin{remark}
	Since lattices are discrete subset, complete sequence of minimal vectors always exist.  These sequences are not unique, can be finite,  infinite one sided, or infinite two sided.
\end{remark}

Consider the lexicographic preorder on $\R^2$ defined by $(x_1,x_2)\prec(y_1,y_2) 
$
iff $|x_2|<|y_2|$ or $|x_2|=|y_2|$ and $|x_1|\leq |y_1|$.

\begin{lemma}\label{lem:consecutive}
	Two minimal vectors $u=(u_{1},u_{2})$ and $v=(v_{1},v_{2})$ in a lattice  $\LL\subset\R^2$ are
	consecutive iff $|u_{2}|<|v_{2}|$ and the only lattice point in the
	interior of $B(u,v)$ is zero.
\end{lemma}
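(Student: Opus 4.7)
The plan is to prove the two implications separately, each by contradiction, exploiting two facts: (i) setting $M:=\max(|u_1|,|v_1|)$, the interior of $B(u,v)$ is the open rectangle $\{(x,y):|x|<M,\ |y|<|v_2|\}$; (ii) by definition of minimality, any nonzero lattice vector in $B(u)$ (respectively $B(v)$) must match $u$ (resp.\ $v$) in both absolute coordinates.

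For the ``$\Leftarrow$'' direction, I will assume a minimal vector $w$ satisfies $|u_2|<|w_2|<|v_2|$ and derive that $w$ lies in the interior of $B(u,v)$, contradicting the hypothesis. The key step is a double pinch: since $|w_2|<|v_2|$, having $|w_1|\leq|v_1|$ would force $w\in B(v)$ with a strict inequality in the $y$-coordinate, violating minimality of $v$; since $|u_2|<|w_2|$, having $|u_1|\leq|w_1|$ would force $u\in B(w)$ with a strict inequality in the $y$-coordinate, violating minimality of $w$. Hence $|v_1|<|w_1|<|u_1|$, so $M=|u_1|>|w_1|$ and $w$ sits in the interior of $B(u,v)$.

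For ``$\Rightarrow$'', I take a nonzero $w\in\Lambda$ with $|w_1|<M$ and $|w_2|<|v_2|$ and split into cases. If $|v_1|\geq|u_1|$, then $M=|v_1|$, and $|w_1|<|v_1|$, $|w_2|<|v_2|$ place $w\in B(v)$ with strict inequality on the second coordinate, violating minimality of $v$. Otherwise $|u_1|>|v_1|$, $M=|u_1|$, and $|w_1|<|u_1|$; if additionally $|w_2|\leq|u_2|$, then $w\in B(u)$ with strict inequality on the first coordinate, violating minimality of $u$. This leaves the single case $|u_1|>|v_1|$ together with $|u_2|<|w_2|<|v_2|$, which is where the real work sits.

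The hard case will be reduced to producing a minimal vector $w^*$ in the open horizontal strip $|u_2|<|y|<|v_2|$, contradicting consecutiveness of $u,v$. I consider the set $S=\{z\in\Lambda\setminus\{0\}:|z_1|\leq|w_1|\text{ and }|u_2|<|z_2|\leq|w_2|\}$, which is nonempty (it contains $w$) and finite (by discreteness of $\Lambda$ in a bounded box), and I pick $w^*\in S$ minimizing $|z_2|$ and then $|z_1|$. To verify $w^*$ is minimal, I take any nonzero $z\in B(w^*)$: if $|z_2|>|u_2|$, then $z\in S$ and the extremal choice of $w^*$ forces $|z_1|=|w^*_1|$ and $|z_2|=|w^*_2|$; if $|z_2|\leq|u_2|$, then $|z_1|\leq|w^*_1|\leq|w_1|<|u_1|$ places $z\in B(u)$ with strict inequality on the first coordinate, contradicting minimality of $u$. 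The main obstacle is precisely this last step: the possibility of ``small'' lattice points with $|y|\leq|u_2|$ interfering with the minimality of $w^*$ must be ruled out, and this is exactly what the minimality of $u$ delivers.
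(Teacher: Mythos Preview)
Your proof is correct and follows essentially the same route as the paper's: both directions hinge on producing (or using) a minimal vector strictly between $u$ and $v$ in the second coordinate, obtained by a lexicographic-type minimization among nonzero lattice points in the relevant box. The paper's version is slightly leaner because it observes at the outset that necessarily $|u_1|>|v_1|$ (your case $|v_1|\geq|u_1|$ is in fact vacuous, since then $u\in B(v)$ with $|u_2|<|v_2|$ would contradict minimality of $v$) and applies the preorder $\prec$ directly over all of $\overset{\circ}{B}(u,v)\cap\Lambda\setminus\{0\}$, which absorbs your case split and your auxiliary set $S$ into a single step.
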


\begin{proof}
	Let  $u=(u_{1},u_{2})$ and $v=(v_{1},v_{2})$ be two minimal vectors with $|u_2|<|v_2|$. If the set $\overset{o}{B}(u,v)\cap \LL\setminus\{0\} $ is nonempty, then it is finite and there is a $w$ minimal  for the lexicographic preorder $\prec$ in this set. On the one hand $w$ is a minimal vector in $\LL$. On the other hand, $|w_1|<|u_1|$ and $|w_2|<|v_2|$ and since $u$ is a minimal vector we have $|w_2|>|u_2|$. Hence $u$ and $v$ are not consecutive.
	
	Conversely, if $u$ and $v$ are not consecutive there is a minimal vector $w$ with $|u_2|<|w_2|<|v_2|$. Since $w$ is minimal $|u_1|>|w_1|$, hence $w\in  \overset{o}{B}(u,v)\cap \LL$. 
\end{proof}

\begin{proposition}\label{prop:minimalvector}
	Let $u=(u_1,u_2)$ and $v=(v_1,v_2)$ be two consecutive minimal vectors in a lattice $\LL\subset \R^2$. Then the pair
	$(u,v)$ is primitive, i.e. $\LL=\Z u+\Z v$.
\end{proposition}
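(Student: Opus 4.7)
The plan is to apply Lemma \ref{lem:consecutive} together with a reduction argument performed in the basis $(u,v)$ of $\R^2$. The lemma tells us the open box $\overset{o}{B}(u,v)$ meets $\LL$ only at $0$, so everything comes down to showing that any $w\in\LL$ can be shifted by a $\Z$-combination of $u,v$ so as to land inside that open box.

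First I would establish the strict inequality $|v_1|<|u_1|$. If instead $|u_1|\leq|v_1|$, then combined with $|u_2|<|v_2|$ this would give $u\in B(v)$, and minimality of $v$ would force $|u_2|=|v_2|$, contradicting $|u_2|<|v_2|$. This same inequality rules out a relation $v=\lambda u$: such a relation would force $u_2\neq 0$ (otherwise $v_2=0$, contradicting $|v_2|>|u_2|=0$), hence $|\lambda|=|v_2|/|u_2|>1$, hence $|v_1|=|\lambda||u_1|>|u_1|$, a contradiction. So $(u,v)$ is an $\R$-basis of $\R^2$.

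Next, given any $w\in\LL$, I would expand $w=\alpha u+\beta v$ with $\alpha,\beta\in\R$, choose integers $a,b$ so that $\alpha-a,\beta-b\in(-\tfrac12,\tfrac12]$, and set $w'=w-au-bv\in\LL$. The triangle inequality applied to $w'=(\alpha-a)u+(\beta-b)v$ together with the strict inequalities $|v_1|<|u_1|$ and $|u_2|<|v_2|$ yields
\[
|w'_1|\leq\tfrac12(|u_1|+|v_1|)<|u_1|,\qquad |w'_2|\leq\tfrac12(|u_2|+|v_2|)<|v_2|.
\]
Hence $w'\in\overset{o}{B}(u,v)$, so by Lemma \ref{lem:consecutive} one must have $w'=0$, and therefore $w=au+bv\in\Z u+\Z v$.

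I do not foresee a genuine obstacle: the argument is a standard reduction into a fundamental domain. The one point that requires attention is the strict inequality $|v_1|<|u_1|$, which is precisely what guarantees that the reduced vector $w'$ lands in the \emph{interior} of $B(u,v)$ rather than merely on its boundary, and it is the interior that Lemma \ref{lem:consecutive} controls.
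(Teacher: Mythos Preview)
Your proof is correct and slightly cleaner than the paper's. Both arguments reduce an arbitrary $w\in\LL$ modulo $\Z u+\Z v$ and then invoke Lemma~\ref{lem:consecutive} to conclude the residue is $0$, but they differ in how the reduction is carried out.

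The paper reduces to coefficients in $[0,1)$ and then splits into the cases $x+y\leq 1$ and $x+y>1$. In the first case it uses a convexity observation (the point $\tfrac{xu+yv}{x+y}$ lies on the open segment $]u,v[$, hence inside the open box), and treats $x=0$ or $y=0$ separately via the minimality of $v$ or $u$ alone. In the second case it replaces $w$ by $u+v-w$ to return to the first case. Your choice of a centred fundamental domain $(-\tfrac12,\tfrac12]^2$ lets the triangle inequality do all the work in one stroke, with no case split and no separate treatment of the boundary situations; the strict inequality $|v_1|<|u_1|$ (which the paper merely asserts but you actually justify) is exactly what turns the bound $\tfrac12(|u_1|+|v_1|)$ into a strict one, so the reduced vector lands in the \emph{open} box as required.
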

\begin{proof}
	We can suppose $v_2>u_2\geq 0$. Since $v$ is minimal $|u_1|>|v_1|$. Let $w=(w_1,w_2)=xu+yv$ be in $\LL$ with $0\leq x,y<1$, we want to show that $x=y=0$. 
	
	Suppose $x+y\leq 1$. If $x,y>0$, then $w=(x+y)\tfrac{xu+yv}{x+y}$ is $(x+y)$ times a vector in the open line segment $]u,v[$, thus $w$ in the interior of the box $B(u,v)$ which contradicts Lemma \ref{lem:consecutive}. If $x=0$, then $w=yv$ and since $v$ is minimal, $y=0$.  If $y=0$, then $x=0$  as well.
	
	Suppose that $x+y>1$. The vector $w'=u+v-w=x'u+y'v$ is in $\LL$. Since $x'=1-x$ and $y'=1-y$ are both in $]0,1[$ and since $x'+y'=2-x-y<1$, $w'$ is in the interior of the box $B(u,v)$ which contradicts Lemma \ref{lem:consecutive}.
\end{proof}

\subsection{Minimal vectors and Diophantine approximations}
\begin{definition}
	Let $\ttt$ be a real number. A pair $(p,q)\in\Z\times\N^*$ is a best approximation vector of $\ttt$ if  for all $(a,b)\in\Z^2$,
	\begin{align*}
		\left\{
		\begin{array}[c]{l}
			0<|b|<|q|\Rightarrow |p-q\ttt|<|a-b\ttt|\\
			0<|b|\leq|q|\Rightarrow |p-q\ttt|\leq|a-b\ttt|
		\end{array}
		\right..
	\end{align*}
\end{definition}

\begin{proposition}\label{prop:best}
	Let $\ttt$ be a real number and consider  the lattice $\LL_{\ttt}$ defined by
	\[
	\LL_{\ttt}=\begin{pmatrix}
		1&-\ttt \\
		0&1
	\end{pmatrix}\Z^2=M_{\ttt}\Z^2.
	\]
	Then $X=\begin{pmatrix}
		x\\y
	\end{pmatrix}=M_{\ttt}\begin{pmatrix}
		p\\q
	\end{pmatrix}\in\LL_{\ttt}
	$ is a minimal vector with $y\neq 0$ iff $(p,q)$ is a best  approximation vector of $\ttt$.
\end{proposition}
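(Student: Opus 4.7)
The plan is to prove the equivalence by a case analysis built on the elementary observation that every vector of $\LL_{\ttt}$ has the form $v=(a-b\ttt,b)$ for some $(a,b)\in\Z^2$, so that the containment $v\in B(X)$ translates exactly into the simultaneous inequalities $|a-b\ttt|\leq|p-q\ttt|$ and $|b|\leq q$. Without loss of generality I take $q>0$ (the box $B(X)$ only depends on $|q|$, and a best approximation vector has $q\in\N^*$).

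For the forward direction, assume $X=(p-q\ttt,q)$ is a minimal vector in $\LL_{\ttt}$. Given $(a,b)\in\Z^2$ with $0<|b|<q$, the vector $v=(a-b\ttt,b)$ is nonzero; if $|a-b\ttt|\leq|p-q\ttt|$ held then $v\in B(X)$, so minimality would force $|b|=q$, a contradiction. This yields the strict inequality of the best-approximation definition. For $0<|b|\leq q$, a symmetric argument ruling out $|a-b\ttt|<|p-q\ttt|$ (which would contradict the equality $|v_1|=|u_1|$ demanded by minimality) gives the non-strict inequality.

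For the converse, assume $(p,q)$ is a best approximation vector and let $v=(a-b\ttt,b)$ be any nonzero element of $\LL_{\ttt}\cap B(X)$. I split into three cases in $b$. If $0<|b|<q$, the strict best-approximation inequality directly contradicts $|a-b\ttt|\leq|p-q\ttt|$. If $|b|=q$, the non-strict best-approximation inequality combined with $v\in B(X)$ forces $|a-b\ttt|=|p-q\ttt|$ and $|b|=q$, precisely the equalities required by minimality.

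The main obstacle is the remaining case $b=0$, since the definition of best approximation says nothing when $b=0$: here $v=(a,0)$ with $a\neq 0$ and $|a|\leq|p-q\ttt|$, and one must show no such $v$ exists. I would extract the bound $|p-q\ttt|<1$ from the hypothesis by applying the best-approximation inequality to the comparison pair $(n,1)$, where $n$ is the nearest integer to $\ttt$: this yields $|p-q\ttt|\leq|n-\ttt|\leq\tfrac12<1\leq|a|$, ruling the case out and completing the proof.
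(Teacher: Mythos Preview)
Your argument is correct and follows essentially the same route as the paper's proof: both directions translate membership in $B(X)$ into the two simultaneous inequalities on $|a-b\ttt|$ and $|b|$, and the only nontrivial point in the converse is the case $b=0$, which you (like the paper) handle by bounding $|p-q\ttt|\le\tfrac12$. Your justification of that bound via the comparison pair $(n,1)$ with $n$ the nearest integer to $\ttt$ is exactly the implicit step behind the paper's assertion that $\tfrac12\ge|p-q\ttt|$.
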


\begin{proof}
	Suppose that $X=\begin{pmatrix}
		x\\y
	\end{pmatrix}$ is a minimal vector with $y\neq 0$. If $a$ and $b$ are  integers with $0<|b|<|y=q|$, then $Y=\begin{pmatrix}
		a-b\ttt \\
		b
	\end{pmatrix}\notin B(X)$ which implies $|a-b\ttt|>|p-q\ttt|$. If $|b|=|q|$ and if $Y\in B(X)$ then $|a-b\theta|=|p-q\theta|$.
	
	Conversely, if $(p,q)$ is a best  approximation vector of $\ttt$, then for any $(a,b)\in\Z^2$, $Y=\begin{pmatrix}
		a-b\ttt \\
		b
	\end{pmatrix}\in B(X)$ implies
	\[\left\{\begin{array}{ll}
		|a-b\ttt|\leq |p-q\ttt|\\ |b|\leq|q|
	\end{array}\right..
	\]
	If $b\neq 0$, this in turn implies $|a-b\ttt|=|p-q\ttt|$ and $|b|=|q|$ by definition of best approximation vectors. If $b=0$ and $a\neq 0$ then $|a|\geq 1>\tfrac{1}{2}\geq |p-q\ttt|$, hence $Y\notin B(X)$.
\end{proof}

\section{Minimal vectors and the natural extension of the Gauss map}
\subsection{Definition of the natural extension}

Let denote $\lfloor x\rfloor $ the lower integer part of the real number $x$ and $\{x\}=x-\lfloor x\rfloor $ its fractional part. Set
\[
U=]0,1[^2\,\cup \,(\{0\}\times [0,\tfrac12])\cup ([0,\tfrac12]\times\{0\}).
\]
\begin{proposition} \label{prop:gauss} Let $\LL$ be a lattice in $\R^2$ and let $u=(u_1,u_2)$ and $v=(v_1,v_2)$ be a pair of consecutive minimal vectors in $\LL$ with $u_2,v_2\geq 0$ and $v_2>u_2$. 
	\begin{enumerate}
	\item Then	
	  $u_1\neq 0$ and there exist $(x,y)\in U$ and $\eps\in\{-1,1\}$ unique such that
		\begin{align*}
			u&=(u_1,u_2)=(\eps |u_1|,v_2y)\\
			v&=(v_1,v_2)=(-\eps |u_1|x,v_2).
		\end{align*}
	(In the case $u_2=0$ and $v_1=\tfrac12 u_1$ we change $u$ in $-u$.)
	\item If $v_1\neq 0$, then, with $a=\lfloor \tfrac1x\rfloor $, 
	\[
	w=u+av
	\]
	 is the minimal vector that follows immediately $v$. Furthermore, 
	\begin{align*}
		v&=(\eps'|v_1|,w_2y')\\
		w&=(-\eps'|v_1|x',w_2)
	\end{align*}
	where
	\begin{align*}
		&\eps'=-\eps,\,w_2=v_2(a+y),\\
		 &x'=\{\tfrac1{x}\} \text{ and } y'=\frac{1}{a+y}.
	\end{align*}
	\end{enumerate}
\end{proposition}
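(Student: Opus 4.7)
The plan is to verify the explicit formulas by direct computation, then establish that $w = u + av$ is minimal and consecutive to $v$ using Lemma~\ref{lem:consecutive} and Proposition~\ref{prop:minimalvector}.

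For part~(1), I first show $u_1 \neq 0$: if $u = (0, u_2)$ with $u_2 > 0$, then $v - u = (v_1, v_2 - u_2)$ lies in $B(v)$ with second coordinate strictly less than $v_2$, contradicting minimality of $v$. Setting $\eps = \sgn(u_1)$ and $y = u_2/v_2 \in [0, 1)$ gives $u = (\eps|u_1|, v_2 y)$. The bound $|v_1| < |u_1|$ follows because otherwise $u \in B(v)$ with $|u_2| < v_2$, again contradicting minimality of $v$; so $x := |v_1|/|u_1| \in [0, 1)$. For the sign of $v_1$: when $u_2 > 0$ and $v_1 \neq 0$, the vector $v - u$ would sit in the interior of $B(u, v)$ if $\sgn(v_1) = \sgn(u_1)$, contradicting Lemma~\ref{lem:consecutive}; when $u_2 = 0$, $v$ and $v - u$ both give valid consecutive partners of $u$ with opposite signs for $v_1$, and the convention of replacing $u$ by $-u$ when $v_1 = u_1/2$ fixes the sign. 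The boundary conditions for $(x, y) \in U$ are verified similarly: if $u_2 = 0$ and $|v_1| > |u_1|/2$, then one of $v \pm u$ would lie in $B(v)$ with strictly smaller first-coordinate absolute value, contradicting minimality of $v$, so $x \leq 1/2$; if $v_1 = 0$ and $u_2 > v_2/2$, then $v - u \in B(u)$ would have strictly smaller second-coordinate absolute value, contradicting minimality of $u$, so $y \leq 1/2$. Uniqueness of $(\eps, x, y)$ is immediate from the defining formulas.

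For part~(2), direct substitution with $a = \lfloor 1/x \rfloor \geq 1$ and the identity $1 - ax = x\{1/x\}$ yields
\[
w_1 = \eps|u_1|(1 - ax) = \eps|v_1|\{1/x\}, \qquad w_2 = u_2 + a v_2 = v_2(a + y),
\]
from which $\eps' = -\eps$, $x' = \{1/x\}$, and $y' = 1/(a+y)$ follow. To prove $w$ is minimal and $v, w$ are consecutive, Proposition~\ref{prop:minimalvector} gives that $(u, v)$, and hence $(v, w) = (v, u + av)$, is a basis of $\LL$, so any lattice vector is $z = pv + qw$ with
\[
z_1 = \eps|v_1|\bigl(q\{1/x\} - p\bigr), \qquad z_2 = v_2\bigl(p + (a + y)q\bigr).
\]
By Lemma~\ref{lem:consecutive}, consecutiveness of $v, w$ amounts to the only lattice point in the interior of $B(v, w) = B(|v_1|, w_2)$ being $0$; the strict inequalities $|p - q\{1/x\}| < 1$ and $|p + (a + y)q| < a + y$ are handled by a short case check. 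For $q = 0$ they force $p = 0$; for $q \geq 1$, the first gives integer $p \in \{0, 1, \ldots, q\}$, while the second forces $p < -(a+y)(q-1)$, hence $p \leq -q$, incompatible; the case $q \leq -1$ is symmetric. The same analysis with the nonstrict inequalities coming from $z \in B(w)$ restricts $B(w) \cap \LL$ to $\{0, \pm w\}$, confirming that $w$ is itself minimal.

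The main obstacle is this case analysis in part~(2), verifying that no nonzero lattice point lies in the interior of $B(v, w)$ and that $B(w) \cap \LL = \{0, \pm w\}$; the sign and boundary bookkeeping in part~(1), in particular the convention when $u_2 = 0$ and $v_1 = u_1/2$, is the other delicate point.
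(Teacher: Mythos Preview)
Your proof of part~(1) follows essentially the same route as the paper's: both arguments show that the ``wrong'' sign combination, or a boundary violation $x>\tfrac12$ (when $y=0$) or $y>\tfrac12$ (when $x=0$), would place one of $v\pm u$ inside a box where Lemma~\ref{lem:consecutive} or the minimality of $u$ or $v$ forbids it. The details you give are correct, with perhaps the sign discussion when $u_2=0$ phrased a bit loosely (as in the paper, the point is simply that when $u_2=0$ one is free to replace $u$ by $-u$ to force $\alpha=-1$; the parenthetical about $v_1=\tfrac12 u_1$ in the statement only resolves the residual ambiguity at $x=\tfrac12$).

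Part~(2) is where your argument genuinely diverges from the paper's. The paper does not start from $w=u+av$ and verify it is minimal; instead it lets $w$ be the lexicographically smallest nonzero lattice vector in the strip $\{|x_1|<|v_1|\}$, which is automatically the minimal vector following $v$, and then uses Proposition~\ref{prop:minimalvector} to write $w=\pm u+nv$ with $\det_{(u,v)}(v,w)=\pm 1$. The conditions $w_2>0$, $n\geq 1$, and $|w_1|<|v_1|$ then pin down the sign and force $n=a$. Your approach goes the other way: you \emph{define} $w=u+av$, note that $(v,w)$ is again a basis, and then run an explicit integer case analysis on $z=pv+qw$ to show that the interior of $B(v,w)$ and the box $B(w)$ contain no unwanted lattice points. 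This is correct (your inequality $p<-(a+y)(q-1)$ already gives $p<0$ for $q\geq 1$, which is all you need to contradict $p\in\{0,\dots,q\}$; the sharper bound $p\leq -q$ is unnecessary but harmless). The paper's route is shorter and explains \emph{why} the next minimal vector must be $u+av$; your direct verification is more computational but avoids introducing the lexicographic preorder and is entirely self-contained.
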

\begin{proof}
	1. Let $u=(u_1,u_2)$ and $v=(v_1,v_2)$ be a pair of consecutive minimal vectors with $u_2$ and $v_2$ non negative and $u_2<v_2$. Since $v$ is minimal, we have $|u_1|>|v_1|$, $u_1=\eps|u_1|$, $v_1=\alpha \eps |u_1|x$ and $u_2=v_2y$ where $\eps,\alpha=\pm 1$ and $x,y\in[0,1[$. Consider the vectors
	\begin{align*}
	&w= u-\alpha v=(\eps|u_1|(1-x),v_2(y-\alpha)),\\
	&w'=u+\alpha v=(\eps|u_1|(1+x),v_2(y+\alpha)).
	\end{align*}
\begin{enumerate}
	\item If $x,y>0$ and  $\alpha=1$ then $w$ is in the interior of the box $B(u,v)$, which contradicts Proposition \ref{lem:consecutive}. \\
	\item If $x=0$, then  $|y-\alpha|$ and $|y+\alpha|$ are $\geq y$ because $u$ is minimal. It implies $y\leq 1/2$. \\
	\item If $y=0$, then  $|1-x|$ and $|1+x|$ are $\geq x$ because $v$ is minimal. It implies $x\leq 1/2$.
	\end{enumerate}
	
	It follows that $(x,y)\in U$. In  case (1), when $x,y>0$ only $\alpha=-1$ is possible. In  case (2), $\alpha$ can be either $1$ or $-1$ and  case (3), we can suppose $\alpha=-1$ by changing $u$ in $-u$ if necessary. In the three cases $\alpha=-1$ works. Finally, $\eps,x$ and $y$ are unique because $|u_1|$ and $v_2$ are $>0$.
	
	2. If $v_1\neq 0$, then $x>0$ and the  vector $u+av$ is in  the strip $\{(x_1,x_2):|x_1|<|v_1|\}$ for is first coordinate is $\eps|u_1|x(\tfrac1x-a)=-v_1\{\tfrac1x\}$. If $w=(w_1,w_2)\in\LL\setminus\{0\} $ is minimal for the lexicographic preorder $\prec$ in this strip then $w$  is the minimal vector that immediately follows $v$.
	By Proposition \ref{prop:minimalvector}, $\LL=\Z u+\Z v=\Z v+\Z w$, hence $\det_{(u,v)}(v,w)=\pm 1$. Therefore $w=\pm u +nv$ where $n\in \Z$. We can suppose $w_2>0$. It implies $n>0$ for $v_2>u_2\geq 0$. Now, 
	\[
	|w_1|=|nv_1\pm u_1|=|v_1||\pm \tfrac1x -n|<|v_1|,
	\]
	hence $|\pm \tfrac1x -n|<1$. Since $n\geq 1$, the sign $\pm$ must be $+$ and $n=\lfloor \tfrac1x\rfloor =a$ or $a+1$. Since $w_2=u_2+nv_2$ and since $w$ is minimal for the lexicographic preorder $\prec$, $n=a$. Therefore 
	\[
	w=u+av
	\]
	 Finally, we obtain
	\begin{align*}
		v&=(\eps'|v_1|,w_2y')\\
		w&=(-\eps'|v_1|x',w_2)
	\end{align*}
	where
	\begin{align*}
		\eps'=-\eps,\, x'=\{\tfrac1{x}\} &\text{ and } y'=\frac{1}{a+y}.
	\end{align*}	
\end{proof}	
\begin{definition}\label{def:intrinsic}
	Let $\LL$ be a lattice in $\R^2$.
	\begin{enumerate}
		\item Let $u=(u_1,u_2)$ and $v=(v_1,v_2)$ be two consecutive minimal vectors in $\LL$. The triple $(\eps,x,y)\in\{-1,1\}\times U$ associated with $(u,v)$ by Proposition \ref{prop:gauss} is called the {\sl intrinsic} coordinates of the pair $(u,v)$.
		\item The map $T:]0,1[^2\cup (]0,\tfrac12]\times \{0\})\fff [0,1[^2$ defined by
		\[
		T(x,y)=(\{\tfrac1x\},\frac1{\lfloor \tfrac1x\rfloor +y})
		\] is  ``the natural extension'' of the Gauss map.
	\end{enumerate}
\end{definition}
\begin{remark}
	Natural extensions of measure preserving maps were introduced  by Rohlin in 1961 (see \cite{Ro}). Here we shall not prove that the map $T$ is the natural extension of the Gauss map, i.e., is the ``smallest'' invertible extension of the Gauss map.  We shall only prove that it is invertible and measure preserving.
\end{remark}
\subsection{Properties of the natural extension}
\begin{lemma}
	$T$ is one to one and $T(]0,1[^2\cup (]0,\tfrac12]\times \{0\}))=]0,1[^2\cup (\{0\}\times ]0,\tfrac12])$. Furthermore,
	for $(x',y')\in U\setminus(]0,\tfrac12])\times\{0\}$, 
	\[
	T^{-1}(x',y')=(\tfrac1{\lfloor \tfrac1{y'}\rfloor +x'},\{\tfrac1{y'}\}).
	\]
\end{lemma}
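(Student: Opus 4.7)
My plan is to exhibit the candidate inverse $S:(x',y')\mapsto(1/(\lfloor 1/y'\rfloor+x'),\{1/y'\})$ suggested by the statement and to verify the three claims (injectivity, image, formula for $T^{-1}$) simultaneously, by showing that $S$ and $T$ are two-sided inverses between the domain $]0,1[^2\cup(]0,\tfrac12]\times\{0\})$ and the target set $A:=\,]0,1[^2\cup(\{0\}\times]0,\tfrac12])$.

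The core observation is the following. If $T(x,y)=(x',y')$, then by definition $1/y'=\lfloor 1/x\rfloor+y$. Since $\lfloor 1/x\rfloor$ is a positive integer and $y\in[0,1[$, this is exactly the integer-plus-fractional-part decomposition of $1/y'$, so I can read off $\lfloor 1/y'\rfloor=\lfloor 1/x\rfloor$ and $\{1/y'\}=y$. Combining with $x'=\{1/x\}=1/x-\lfloor 1/x\rfloor$ yields $x=1/(\lfloor 1/y'\rfloor+x')$, which proves $S\circ T=\mathrm{id}$ on the whole domain. In particular $T$ is injective, and $S$ is the only possible inverse.

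For the image, I would split $(x',y')=T(x,y)$ according to whether $1/x$ is an integer. If $1/x\notin\Z$ (i.e.\ $x'>0$), then both coordinates of $T(x,y)$ lie in $]0,1[$ and $(x',y')\in\,]0,1[^2\subseteq A$. If $1/x\in\Z$, then necessarily $x=1/n$ with $n\geq 2$ (since $0<x<1$), so $x'=0$ and $y'=1/(n+y)\leq 1/n\leq 1/2$, placing $(x',y')\in\{0\}\times]0,\tfrac12]\subseteq A$. Hence $T(\mathrm{dom}\,T)\subseteq A$. The reverse inclusion is obtained by the mirror argument applied to $S$: split on whether $y'=1/n$ for some integer $n\geq 2$, check case by case that $S(x',y')$ lands in $]0,1[^2\cup(]0,\tfrac12]\times\{0\})$, and deduce $T\circ S=\mathrm{id}_A$ by running the same arithmetic identity in reverse.

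The main delicate point is the bookkeeping of boundary cases. The asymmetric descriptions of the domain and the image precisely reflect that $T$ carries the slice $\{y=0\}$ bijectively onto the slice $\{x'=0\}$, and the common constraint ``coordinate $\leq\tfrac12$'' on both sides is forced by the implication ``$1/x\in\Z$ and $x<1$ imply $\lfloor 1/x\rfloor\geq 2$'' (together with its symmetric counterpart for $1/y'$). Once these boundary identifications are tracked carefully, the rest of the verification reduces to routine manipulation of floor and fractional parts.
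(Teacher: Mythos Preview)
Your proof is correct and follows essentially the same route as the paper: both use the key observation that $1/y'=\lfloor 1/x\rfloor+y$ is the integer-plus-fractional-part decomposition of $1/y'$, deduce $\lfloor 1/y'\rfloor=\lfloor 1/x\rfloor$ and $\{1/y'\}=y$, and thereby recover $(x,y)$ from $(x',y')$; the image is then pinned down by the same boundary analysis ($1/x\in\Z$ forces $\lfloor 1/x\rfloor\geq 2$, hence $y'\leq\tfrac12$). Your write-up is slightly more explicit in framing the argument as exhibiting a two-sided inverse $S$, whereas the paper leaves the verification $T\circ S=\mathrm{id}_A$ as ``easy to check,'' but the underlying computation is identical.
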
	
\begin{proof}
If $T(x,y)=(x',y')$ then $0< y'=\frac1{\lfloor \tfrac1x\rfloor +y}<1$. With $b=\lfloor \tfrac1{y'}\rfloor$, we have  , $ b\leq \lfloor\tfrac1x\rfloor +y<b+1$, which implies $\lfloor \tfrac1x \rfloor=b$.  In turn this implies $x'=\tfrac1x-b$ and $y=\tfrac1{y'}-b$ and then   $(x,y)=(\tfrac1{b+x'},\{\tfrac1{y'}\})$. Therefore $T$ is one to one. Moreover, if $x'=0$, then $x=\tfrac1b$ and $b$ cannot be $1$ so that $y'\leq \tfrac12$. It follows that $T(]0,1[^2\cup (]0,\tfrac12]\times \{0\}))\subset ]0,1[^2\cup (\{0\}\times ]0,\tfrac12])$ . Conversely, it is easy to check that if $(x',y')\in]0,1[^2\cup (\{0\}\times ]0,\tfrac12])$ then $T(\tfrac1{\lfloor \tfrac1{y'}\rfloor +x'},\{\tfrac1{y'}\})=(x',y')$.	
\end{proof}
\begin{lemma}[Contraction Lemma]
	Let $x\in]0,1[$ be such that $T^2(x,0)$ is defined. Then for any $y,z\in]0,1[$, the four pairs $(x',y')=T(x,y)$, $(x'',y'')=T^2(x,y)$, $(x',z')=T(x,z)$ and $(x'',y'')=T^2(x,z)$ are defined and 
	\[
	|z'-y'|\leq |z-y| \text{ and } |z''-y''|\leq \tfrac12 |z-y|.
	\]
\end{lemma}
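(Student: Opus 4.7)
The plan is to prove both inequalities by a direct computation based on the explicit formula
\[
T(u,v) = \Bigl(\{\tfrac{1}{u}\},\,\tfrac{1}{\lfloor 1/u\rfloor+v}\Bigr).
\]
The key preliminary observation is that the hypothesis ``$T^2(x,0)$ is defined'' forces $a := \lfloor 1/x\rfloor \geq 2$: indeed, $T(x,0)$ defined requires $(x,0)\in]0,\tfrac12]\times\{0\}$, hence $1/x\geq 2$. Moreover, $T(x,0) = (\{1/x\},1/a)$ lying in the domain of $T$ forces $\{1/x\}\in\,]0,1[$, since $(0,1/a)$ is not in the domain. Consequently, for any $y\in\,]0,1[$ the pair $(x,y)$ lies in $]0,1[^2$ and $T(x,y) = (\{1/x\},1/(a+y)) \in\,]0,1[^2$, so both $T(x,y)$ and $T^2(x,y)$ are defined, and the same for $z$.

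With the setup $x' := \{1/x\}$, $y' := 1/(a+y)$, $z' := 1/(a+z)$, the first coordinates of $T(x,y)$ and $T(x,z)$ coincide and a one-line computation gives
\[
|y'-z'| = \left|\frac{1}{a+y}-\frac{1}{a+z}\right| = \frac{|y-z|}{(a+y)(a+z)}.
\]
Since $a\geq 2$ and $y,z>0$, the denominator is $\geq 1$ (in fact $\geq 4$), which yields the first bound $|y'-z'|\leq|y-z|$. For the second iterate, set $b := \lfloor 1/x'\rfloor\geq 1$; applying the same computation to the pair $(x',y'),(x',z')$ gives $y''=1/(b+y')$, $z''=1/(b+z')$ and
\[
|y''-z''| = \frac{|y'-z'|}{(b+y')(b+z')} = \frac{|y-z|}{(a+y)(a+z)\,(b+y')(b+z')}.
\]
Bounding the denominator from below by $(a+y)(a+z)\geq a^2\geq 4$ and $(b+y')(b+z')\geq b^2\geq 1$, I obtain denominator $\geq 4\geq 2$, hence $|y''-z''|\leq \tfrac12|y-z|$.

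There is no substantive obstacle: the entire argument is a telescoping of two elementary computations. The only subtlety worth flagging is the extraction of $a\geq 2$ from the hypothesis on $T^2(x,0)$ — without it one would only get $(a+y)(a+z)\geq 1$ and the factor of $\tfrac12$ would be lost. It is also worth noting that the bound obtained is actually $\tfrac14$, so the $\tfrac12$ stated by the author is comfortably slack; presumably this weaker constant is stated because that is all that will be needed downstream.
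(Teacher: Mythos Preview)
Your proof is correct and proceeds by the same direct computation as the paper. The only organizational difference is that you iterate the one-step estimate $|y'-z'|=\tfrac{|y-z|}{(a+y)(a+z)}$ and then invoke $a\geq 2$ (correctly extracted from the hypothesis on $T^2(x,0)$) to get the factor $\tfrac14$, whereas the paper expands $y''=\tfrac{a+y}{1+aa'+a'y}$ explicitly and bounds the resulting denominator by $(1+aa')^2\geq 4$, which needs only $a,a'\geq 1$; both routes yield the same constant.
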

\begin{proof} With $a=\lfloor \tfrac1x\rfloor $ and $a'=\lfloor \tfrac1{x'}\rfloor $, we have
	\begin{align*}
		&y'=\frac1{a+y},\,z'=\frac1{a+z},\\
		&|z'-y'|=\frac{|y-z|}{|a+z||a+y|}\leq |z-y|,\\
		&y''=\frac1{a'+\frac1{a+y}}=\frac{a+y}{1+aa'+a'y},\,z''=\frac{a+z}{1+aa'+a'z},\\
		&z''-y''= \frac{(1+aa'+a'y)(a+z)-(1+aa'+a'z)(a+y)}{(1+aa'+a'z)(1+aa'+a'y)},\\
		&|z''-y''|\leq \frac{|z-y|}{(1+aa')^2}\leq \tfrac12|z-y|.
	\end{align*} 
\end{proof}
\begin{lemma}
	The probability $\mu=\frac{1}{\ln2(1+xy)^2}dxdy$ on $U$ is $T$-invariant and ergodic. 
\end{lemma}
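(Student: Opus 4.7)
The statement has two halves. \emph{Invariance} is a concrete change-of-variables computation. I would partition the domain of $T$, up to a $\mu$-null set, into the vertical strips $C_b = \{(x,y) \in\,]0,1[^2 : \lfloor 1/x\rfloor = b\}$, $b\geq 1$. On each $C_b$ the map $T(x,y) = (1/x - b,\, 1/(b+y))$ is a $C^1$-diffeomorphism onto $]0,1[^2$ with $|\det DT(x,y)| = \frac{1}{x^2(b+y)^2}$. The key algebraic identity is
\[
1 + x'y' \;=\; 1 + \Bigl(\tfrac1x - b\Bigr)\cdot\tfrac{1}{b+y} \;=\; \frac{1+xy}{x(b+y)}, \qquad (x',y') = T(x,y),
\]
and combining it with the Jacobian formula gives $\frac{1}{(1+x'y')^2}\,|\det DT(x,y)| = \frac{1}{(1+xy)^2}$. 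Summing the resulting change-of-variables identity over $b\geq 1$ yields $\int_U f\circ T\, d\mu = \int_U f\, d\mu$ for every bounded measurable $f$, which is exactly $T$-invariance.

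For \emph{ergodicity} I would reduce to ergodicity of the one-sided Gauss map $G(x) = \{1/x\}$. The projection $\pi(x,y) = x$ intertwines $T$ with $G$, i.e.\ $\pi \circ T = G\circ \pi$, and $\pi_*\mu$ equals the classical Gauss measure $d\nu = \frac{dx}{\ln 2\,(1+x)}$ since $\int_0^1 dy/(1+xy)^2 = 1/(1+x)$. That $(G,\nu)$ is ergodic is a classical result, provable for instance via Knopp's lemma applied to the continued-fraction cylinders. Since $T$ is invertible by the preceding lemma and, under iteration of $T^{-1}$, the coordinate $y$ exactly records the past partial quotients, the Borel $\sigma$-algebra of $U$ coincides modulo $\mu$-null sets with the one generated by the sets $T^{-n}\pi^{-1}(A)$ for $n \geq 0$ and $A \subset\,]0,1[$ Borel. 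Consequently $(T,\mu)$ realises Rohlin's natural extension of $(G,\nu)$, and Rohlin's theorem transfers ergodicity from $G$ to $T$.

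The only real obstacle is the ergodicity half: the invariance is a bare algebraic computation, whereas ergodicity depends either on a black-box appeal to ergodicity of the Gauss map together with the standard fact that natural extensions inherit ergodicity (the route I would take, in keeping with the note's stated level of detail), or, if one insisted on a self-contained proof, on distortion estimates of Rényi type for $G$ that give its exactness and hence the $K$-property of $T$.
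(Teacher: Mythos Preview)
Your invariance argument is the paper's, with the algebra made explicit; the identity $1+x'y'=(1+xy)/(x(b+y))$ is precisely what the paper means by ``straightforward verification''.

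For ergodicity the routes diverge. You factor through the one-sided Gauss map $G$, quote its classical ergodicity, argue that $(T,\mu)$ realises the Rohlin natural extension of $(G,\nu)$, and then transfer ergodicity. The paper deliberately avoids this: the remark following the definition of $T$ says explicitly that the note will \emph{not} prove $T$ is the natural extension. Instead the paper gives a self-contained argument using only Birkhoff's theorem together with two facts already in hand: the Contraction Lemma, which forces the Birkhoff limit of any continuous $f$ to be independent of $y$, and the symmetry $T^{-1}=s\circ T\circ s$ with $s(u,v)=(v,u)$, which by the same contraction mechanism makes the \emph{backward} Birkhoff limit independent of $x$. Since forward and backward limits agree a.e., the limit is constant and hence equals $\int f\,d\mu$; a routine density argument extends this from continuous $f$ to $L^1$.

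Your approach is standard and valid; what the paper's argument buys is that it stays internal to the note---no outside ergodicity theorem is imported, and the Contraction Lemma (needed later anyway for Theorem~1) does all the work. One small slip on your side: the generating condition for the natural extension requires the sets $T^{n}\pi^{-1}(A)$, $n\geq 0$, not $T^{-n}\pi^{-1}(A)$; since $\pi\circ T=G\circ\pi$, the latter all lie in $\pi^{-1}\mathcal{B}$ and cannot generate the full $\sigma$-algebra. Your verbal description (``iterating $T^{-1}$ recovers the past partial quotients of $y$'') is correct, and corresponds to looking at $\pi\circ T^{-n}$, whose level sets are exactly $T^{n}\pi^{-1}(A)$.
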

\begin{proof}
	Since $T$ is a diffeomorphism from $(]0,1[\setminus\{\tfrac1n:n\in\N^2\})\times ]0,1[$ onto $]0,1[\times(]0,1[\setminus\{\tfrac1n:n\in\N^2\})$, it suffices to check that $f\circ T\times |\operatorname{Jac}T|=f$ where $f$ is the density of $\mu$. This verification is straightforward.\medskip
	
	By the contraction Lemma, for all $x\in ]0,1[\setminus \Q$ and all $y,z\in]0,1[$, 
	\[
	\lim_{n\fff\infty}\dd(T^n(x,y),T^n(x,z))=0.
	\]
	 So that, if $f:U\fff\R$ is continuous and if for some $(x,y)\in]0,1[^2$,
	$\lim_{n\rightarrow +\infty }\frac{1}{n
	}\sum_{k=0}^{n-1}f\circ T^{k}(x,y)=l(x,y)$, 
	then for all $z\in]0,1[$, 
	$\lim_{n\rightarrow +\infty }\frac{1}{n}\sum_{k=0}^{n-1}f\circ
	T^{k}(x,z)=l(x,y)$. Therefore, by Birkhoff Theorem, for almost all $(x,y)\in U$, the sequence $\frac{1%
	}{n}\sum_{k=0}^{n-1}f\circ T^{k}(x,y)$ converges to a limit $l(x)$ which does not depend on $y$. Since $ T^{-1}=s\circ T\circ s$ where $s(u,v)=(v,u)$, we also have that for almost all
	$(x,y)$, the sequence 
	\[
	\frac{1}{n}\sum_{k=0}^{n-1}f\circ T^{-k}(x,y)=\frac{1}{n}\sum_{k=0}^{n-1}f\circ s\circ T^{-k}(y,x)
	\]
	converges to a limit $l^{\prime }(y)$ which does not depend on  $x$. Since the forward limit and the backward limit are almost surely equal, it follows that $l(x)=l^{\prime }(y)$ for almost all $(x,y)$. Therefore, the sequence 
	$\frac{1}{n}\sum_{k=0}^{n-1}f\circ T^{k}(x,y)$ converges almost everywhere to a constant that must be the
	mean $M(f)=\int_Uf\,d\mu$. By  Lebesgue Theorem, the convergence also holds  in $L^1(\mu)$. It follows that the sequence of linear maps $A_{n}f=\frac{1}{n}%
	\sum_{k=0}^{n-1}f\circ T^{k}$ converges in $L^{1}(\mu )$ on an everywhere dense set of continuous functions to $M(f)$. Since the sequence of linear maps $(A_n)_n$ is bounded for the operator norm in $L^1(\mu)$, it follows that for all $f\in L^{1}$, $A_{n}f\fff M(f)$
	in $L^{1}(\mu)$ which implies that $T$ is ergodic.
\end{proof}

\section{Hermite best approximations vectors}
Recall that a shortest vector in a lattice with respect to a norm $\|.\|$ is a nonzero vector of the lattice whose norm is minimal.
\begin{definition}	
A {\sl Hermite  vector}  in a lattice $\LL\subset\R^2$ is a vector $w$ in $\LL$ that  is a shortest vector in  $\LL$ for an Euclidean norm $|(x_1,x_2)|_t^2= |tx_1|^2+|\tfrac1tx_2|^2$ where $t$ is a positive real number.\\
A {\sl Hermite best approximation vector} of $\ttt\in \R$ is a pair $(p,q)\in \Z\times\N$ such that $(p-q\ttt,q)$ is a Hermite vector in $\LL_{\ttt}$.
\end{definition}
 \begin{lemma}\label{lem:HermiteMinimal}
 	If $u=(u_1,u_2)$ is a Hermite vector in a lattice $\LL\subset\R^2$, then  $u$ is a minimal vector in $\LL$.
 \end{lemma}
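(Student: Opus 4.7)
The plan is to argue directly from the two definitions. Let $u=(u_1,u_2)$ be a Hermite vector, so there exists $t>0$ such that $u$ minimizes $|\cdot|_t$ over $\Lambda\setminus\{0\}$. To check that $u$ is minimal, I take an arbitrary nonzero $v=(v_1,v_2)\in\Lambda$ with $v\in B(u)$, meaning $|v_1|\leq|u_1|$ and $|v_2|\leq|u_2|$, and I show that both inequalities are equalities.

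The key step is to compute
\[
|v|_t^2 = t^2 v_1^2 + \tfrac{1}{t^2}v_2^2 \leq t^2 u_1^2 + \tfrac{1}{t^2}u_2^2 = |u|_t^2,
\]
which follows termwise from the box inclusion $v\in B(u)$. Since $u$ is a shortest vector for $|\cdot|_t$ and $v\neq 0$, we must have $|v|_t=|u|_t$, so both termwise inequalities $t^2 v_1^2\leq t^2 u_1^2$ and $t^{-2}v_2^2\leq t^{-2}u_2^2$ are in fact equalities. Hence $|v_1|=|u_1|$ and $|v_2|=|u_2|$, which is exactly what the definition of minimal vector requires.

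There is no real obstacle here: the statement is essentially a tautology once the two definitions are unwound, because the boxes $B(u)$ are sublevel sets of the family of norms $|\cdot|_t$ in each coordinate separately. The only thing to notice is that the definition of ``shortest vector'' does not require uniqueness, so one cannot conclude $v=\pm u$; but the weaker conclusion $|v_i|=|u_i|$ for $i=1,2$ is exactly what is needed, and it follows from the fact that both coefficients $t^2$ and $t^{-2}$ are strictly positive, so a strict inequality in either coordinate would produce a strict inequality in the norm.
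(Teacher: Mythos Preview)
Your proof is correct and is essentially the same as the paper's: both exploit that $v\in B(u)$ forces $|v|_t\leq|u|_t$ termwise, with equality only when $|v_i|=|u_i|$ for $i=1,2$. The paper phrases this as the set-theoretic observation $B(u)\subset\{v:|v|_t\leq|u|_t\}$ with $B(u)\setminus\{v:|v|_t<|u|_t\}=\{(\pm u_1,\pm u_2)\}$, but the content is identical to your explicit computation.
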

\begin{proof}
	By definition of Hermite vector, there exists $t>0$ such that $u=(u_1,u_2)$ is a shortest vector for the norm $|.|_t$. Since \[
	B(u)\subset \{v\in\R^2:|v|_t\leq|u|_t\}
	\]
	 and
	\[
    B(u)\setminus \{v\in\R^2:|v|_t<|u|_t\}=\{(\pm u_1,\pm u_2)\},
 \]  
 $u$ is a minimal vector in $\LL$.
\end{proof}

\begin{lemma}\label{lem:hermite}
	Let $u=(u_1,u_2)$ be a  Hermite vector of a lattice $\LL\subset\R^2$. If with $|u_1|>0$, then
	\begin{enumerate} 
		\item there exists a Hermite vector $h=(h_1,h_2)$ with $|h_1|<|u_1|$,
		\item if $h$ is such a Hermite vector with $|h_2|$ minimal, then there exists a positive real number $t$ such that $u$ and $h$ are shortest vectors of $\LL$ with respect to the same norm $|.|_t$.
	\end{enumerate}
\end{lemma}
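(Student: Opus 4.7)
I would apply Minkowski's first theorem to the $|\cdot|_t$-unit ball, which is an ellipse of semi-axes $1/t$ and $t$ and hence of area $\pi$. This yields for every $t>0$ a non-zero lattice vector $v(t)$ with $|v(t)|_t\le R:=2\sqrt{\det\Lambda/\pi}$; such $v(t)$ is a shortest vector for $|\cdot|_t$, hence Hermite, and $t|v(t)_1|\le|v(t)|_t\le R$ gives $|v(t)_1|\le R/t$. Choosing $t>R/|u_1|$ produces the required Hermite vector with $|h_1|<|u_1|$.

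\textbf{Part 2.} Write $H_<$ for the set of Hermite vectors $w$ with $|w_1|<|u_1|$ (non-empty by Part 1) and let $h\in H_<$ have minimal $|h_2|$. My candidate for the common $t$ is
\[
t^{*}:=\left(\frac{h_2^{2}-u_2^{2}}{u_1^{2}-h_1^{2}}\right)^{1/4},
\]
the unique positive $t$ with $|u|_t=|h|_t$. For $t^{*}$ to be well defined I first check $|h_2|>|u_2|$: since $u$ is Hermite, $|u|_{t_0}\le|h|_{t_0}$ for some $t_0>0$, and combined with $|h_1|<|u_1|$ this forces $h_2^{2}>u_2^{2}$. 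It then remains to show that no $v\in\Lambda\setminus\{0\}$ satisfies $|v|_{t^{*}}<|u|_{t^{*}}$, i.e.\ that $u$ and $h$ are simultaneously shortest at $t^{*}$.

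I would argue by contradiction, taking $v$ to be a shortest vector at $t^{*}$, which is then itself Hermite. If $|v_1|\ge|u_1|$, then $|v|_{t^{*}}<|u|_{t^{*}}$ forces $|v_2|<|u_2|$: the subcase $|v_1|=|u_1|$ contradicts minimality of $u$ via Lemma~\ref{lem:HermiteMinimal}, while the subcase $|v_1|>|u_1|$ leads, through the sign analysis of $|v|_t^{2}-|u|_t^{2}$ combined with $|u|_{t_0}\le|v|_{t_0}$, $|h|_{t_0}\ge|u|_{t_0}$, and $|v|_{t^{*}}<|u|_{t^{*}}$, to the contradictory chain $t^{*}<s_v\le t_0\le t^{*}$, where $s_v$ is the unique positive solution of $|u|_{s_v}=|v|_{s_v}$. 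If $|v_1|<|u_1|$ then $v\in H_<$, so the minimality of $|h_2|$ gives $|v_2|\ge|h_2|$; equality with $|v|_{t^{*}}<|h|_{t^{*}}$ forces $|v_1|<|h_1|$, placing $v$ in the interior of $B(h)$ against the minimality of $h$.

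The remaining subcase $|v_1|<|h_1|$ with $|v_2|>|h_2|$ is the main obstacle, because no single box inclusion yields a contradiction. My plan is to exploit the convexity of the Pareto staircase $\{(w_1^{2},w_2^{2}):w\in\Lambda\setminus\{0\}\}+\R_{\ge 0}^{2}$: Hermite vectors correspond to lattice points whose squared coordinates lie on its lower-left convex boundary, because $|w|_t^{2}=t^{2}X+Y/t^{2}$ with $(X,Y)=(w_1^{2},w_2^{2})$ is a linear functional of slope $-t^{4}$. Since $h$ minimizes $|h_2|$ in $H_<$, the point $(h_1^{2},h_2^{2})$ is adjacent to $(u_1^{2},u_2^{2})$ on this boundary, so convexity places $(v_1^{2},v_2^{2})$ on or above the chord through them --- which unpacks exactly to $|v|_{t^{*}}\ge|u|_{t^{*}}$, contradicting our assumption. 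Writing $a=v_2^{2}-u_2^{2}$, $b=h_2^{2}-u_2^{2}$, $c=u_1^{2}-h_1^{2}$, $d=u_1^{2}-v_1^{2}$, convexity yields $a/d\ge b/c$ while $|v|_{t^{*}}<|u|_{t^{*}}$ reads $a/d<b/c$.
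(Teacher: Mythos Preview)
Your argument is correct and takes a genuinely different path from the paper's. For Part~2 the paper sets $t=\sup\{s>0:u\text{ is shortest for }|\cdot|_s\}$, extracts (by discreteness) a single vector $v$ that is shortest for $|\cdot|_{s_n}$ along a sequence $s_n\searrow t$, and then uses monotonicity of the functions $s\mapsto|w|_s^2-|u|_s^2$ together with the fact that $h$ is shortest for some $|\cdot|_r$ with $r\ge t$ to conclude $|h|_t\le|v|_t=|u|_t$, hence equality. You instead fix $t^*$ a priori as the unique solution of $|u|_t=|h|_t$ and verify shortestness there, the crux being the convex-hull picture in squared coordinates: since $u$, $h$, and your hypothetical shortest vector $v$ each minimize a positive linear functional on $\{(w_1^2,w_2^2):w\in\Lambda\setminus\{0\}\}$, all three lie on its lower convex boundary, and with $v_1^2<h_1^2<u_1^2$ the secant-slope inequality for a convex function is exactly your $a/d\ge b/c$. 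This Newton-polygon viewpoint is structural and arguably cleaner; the paper's route is a more hands-on analysis of the one-parameter family $|\cdot|_t$. Two small points to tidy up in your write-up: you skip the easy subcase $|h_1|\le|v_1|<|u_1|$ with $|v_2|>|h_2|$ (immediately dispatched by $|v|_{t^*}>|h|_{t^*}=|u|_{t^*}$), and your ``adjacency'' assertion is in fact not needed once you observe that $v$, being shortest at $t^*$, already lies on the convex boundary, so the three-point secant inequality applies directly.
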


\begin{proof}
	1. Since $|u_1|>0$, there exists at least one nonzero vector $v=(v_1,v_2)\in\LL$ such that $|v_1|<|u_1|$. For $s>0$, large enough, $|v|_s^2=s^2|v_1|^2+\tfrac1{s^2}|v_2|^2<s^2|u_1|^2$. Let $h=(h_1,h_2)$ be a shortest vector in $\LL$ for the norm $|.|_s$. Then $s^2|h_1|^2\leq |h|_s^2\leq |v|_s^2<s^2|u_1|^2$.
	
	2. Suppose now $h=(h_1,h_2)$ is a hermite best approximation vector  with $|h_1|<|u_1|$ with $|h_2|$  minimal. . Let
	Let
	 \[
	t=\sup\{s>0:u \text{ is a shortest vector with respect to the norm } |.|_s\}.
	\]
	By continuity, we see that $u$ is still a shortest vector with respect to the norm $|.|_t$.
	We want to show that $|u|_t=|h|_t$. We use the following short steps:
	\begin{itemize}		
		\item If $v=(v_1,v_2)$ and $w=(w_1,w_2)$ are two Hermite vectors and if $|w_1|<|v_1|$ then $|w_2|>|v_2|$ because $w$ is a minimal vector. Therefore the function $s\fff |v|_s^2-|w|_s^2$ is strictly increasing. 
		\item There exists $r\geq t$ such that $h$ is a shortest vector with respect to the norm $|.|_r$. Otherwise, there exists $r<t$ such that $h$ is a shortest vector with respect to $|.|_r$ and we would have 
		$|u|_t-|h|_t>|u|_r-|h|_r\geq 0$ and $u$ would not be a shortest vector with respect to $|.|_t$.
		\item If $v=(v_1,v_2)$ is a shortest vector of $\LL$ with respect to $|.|_s$ for some $s>t$ then $|v_1|\leq |u_1|$. Otherwise, $|v|_t-|u|_t<|v|_s-|u|_s\leq 0$ and $u$ would not be a shortest vector with respect to $|.|_t$. 
		\item Since $\LL$ is discrete there exists a vector $v=(v_1,v_2)$ and a sequence $(s_n)$ of real numbers (strictly) decreasing to $t$ such that $v$ is a shortest vector with respect to the norm $|.|_{s_n}$ for each $n$.
		\item We have $|v_1|\neq|u_1|$. Otherwise $|v_2|=|u_2|$ and $u$ would be a shortest vector with respect to a norm $|.|_{s_n}$ with $s_n>t$.
		\item We have $|v_1|<|u_1|$, otherwise 
		$|v|_t-|u|_t<|v|_{s_n}-|u|_{s_n}<0$.
		\item If $|h_1|=|v_1|$ we are done.
		\item If $|h_1|\neq |v_1|$ then by definition of $h$ we have $|h_2|<|v_2|$ and therefore $|v_1|<|h_1|$. It follows that
		$|h|_t-|v|_t\leq |h|_r-|v|_r\leq 0$ and therefore $|h|_t\leq|v|_t=|u|_t$.
	\end{itemize}
\end{proof}

\begin{proposition}\label{prop:hermite} Let $\LL$ be a lattice in $\R^2$ and let $u=(u_1,u_2)$ and $v=(v_1,v_2)$ be a pair of consecutive minimal vectors in $\LL$ with $u_2,v_2\geq 0$ and $v_2>u_2$. Let $(\eps,x,y)\in\{-1,1\}\times U$ be the intrinsic coordinates of $(u,v)$ (see definition \ref{def:intrinsic}).
\begin{enumerate}
\item[a.] One at least of the two vectors $u$ and $v$ is a Hermite vector.
\item[b.] $u$ is a Hermite vector and $v$ is not a Hermite vector iff
\[
x>\frac{2y+1}{y+2}.
\]
Furthermore, if this inequality holds then $v$ and $v+u$ are consecutive minimal vectors.
\end{enumerate}
\end{proposition}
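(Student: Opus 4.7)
The plan is to work at a single critical parameter $t_1>0$ defined by $|u|_{t_1}=|u+v|_{t_1}$, namely $t_1^4 = v_2^2(2y+1)/(u_1^2\,x(2-x))$, and to exploit the following basic fact about $2$-dimensional lattices: if $(\xi,\eta)$ is a $\Z$-basis of $\Lambda$ with $|\langle\xi,\eta\rangle_t|\le\tfrac12|\xi|_t|\eta|_t$ at some $t$ (a \emph{Gauss-reduced} basis), then the shorter of $\xi,\eta$ is a shortest nonzero vector of $\Lambda$ for $|\cdot|_t$, since $|a\xi+b\eta|_t^2\ge a^2|\xi|_t^2-|ab|\,|\xi|_t|\eta|_t+b^2|\eta|_t^2\ge\min(|\xi|_t,|\eta|_t)^2$ for $(a,b)\in\Z^2\setminus\{0\}$. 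I will apply this to the two natural bases $(u,v)$ and $(u,u+v)$ of $\Lambda$ at the single parameter $t_1$.

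A direct computation in intrinsic coordinates, with $\delta=t_1^{-1}v_2$, yields $\langle u,v\rangle_{t_1}=-\delta^2(1+xy)/(2-x)$, $|u|_{t_1}^2=\delta^2[(2y+1)+y^2x(2-x)]/(x(2-x))$, and $|v|_{t_1}^2=2\delta^2(1+xy)/(2-x)$. The core of the argument is the algebraic identity
\[
2x(1+xy)-[(2y+1)+y^2x(2-x)] \;=\; (1+xy)\,(xy+2x-2y-1).
\]
Since $1+xy>0$ and the linear factor $xy+2x-2y-1\le 0$ precisely when $x\le(2y+1)/(y+2)$, this identity simultaneously controls three equivalent incarnations of the threshold: (i) $|v|_{t_1}\le|u|_{t_1}$; (ii) $\cos^2\theta_{u,v}(t_1)\le 1/4$; and, using $\cos\theta_{u,u+v}(t_1)=1+\langle u,v\rangle_{t_1}/|u|_{t_1}^2$, (iii) $\cos\theta_{u,u+v}(t_1)\le 1/2$ (the lower bound $\ge-1/2$ is automatic since $\langle u,u+v\rangle_{t_1}>0$).

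With these in hand the conclusions follow immediately. If $x\le(2y+1)/(y+2)$, then $(u,v)$ is Gauss-reduced at $t_1$ with $|v|_{t_1}\le|u|_{t_1}$, so $v$ is a shortest vector of $\Lambda$ for $|\cdot|_{t_1}$ and is Hermite. If $x>(2y+1)/(y+2)$, then $x>1/2$, so by Proposition~\ref{prop:gauss}(2), $a=\lfloor 1/x\rfloor=1$ and $w=u+v$ is the next minimal vector after $v$ (this is the ``furthermore'' claim); moreover $(u,u+v)$ is Gauss-reduced at $t_1$ with $|u|_{t_1}=|u+v|_{t_1}\le|v|_{t_1}$, so $u$ is Hermite. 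In this case $v$ is not Hermite: any $t$ with $v$ shortest would satisfy both $t\ge t_*$ and $t\le t_2$ where $t_2^4=v_2^2y(y+2)/(u_1^2(2x-1))$, but the identity rewritten as $(1-y^2)(2x-1)-y(y+2)(1-x^2)=(1+xy)(xy+2x-2y-1)$ shows $t_2<t_*$ in this regime, a contradiction. Parts (a) and (b) both follow; the only technical obstacle is spotting the factorization above.
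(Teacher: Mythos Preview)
Your argument is correct and takes a genuinely different route from the paper's. Two small points: you never define $t_*$ (it is evidently the root of $|u|_t=|v|_t$, namely $t_*^4=v_2^2(1-y^2)/(u_1^2(1-x^2))$, but say so), and your $t_1$ is undefined when $x=0$; that case is trivial ($v_1=0$ makes $v$ shortest for large $t$) but should be mentioned.

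The paper proceeds indirectly. It first invokes Lemma~\ref{lem:hermite} to produce, for any Hermite $u$, the next Hermite vector $h$ and a parameter $t$ at which both are shortest; it then rules out $a=\lfloor 1/x\rfloor\ge 2$ and $h\ne u+v$ by sign computations on the quantity $\Delta=(h_2^2-u_2^2)(u_1^2-v_1^2)+(u_1^2-h_1^2)(u_2^2-v_2^2)$, deducing that Hermite vectors never skip two consecutive minimal vectors. Part~(a) then follows by combining this with an existence argument (step~2) for a Hermite vector below any given minimal vector, and part~(b) is obtained by a separate calculation plus a symmetry trick (swapping coordinates) to show $u$ is Hermite when the inequality holds.

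Your approach is more direct and self-contained: you pick the single parameter $t_1$ where $|u|_{t_1}=|u+v|_{t_1}$, check Gauss-reduction of the bases $(u,v)$ and $(u,u+v)$ at that one $t_1$, and let the factorization $2x(1+xy)-[(2y+1)+y^2x(2-x)]=(1+xy)(xy+2x-2y-1)$ do all the work. This avoids Lemma~\ref{lem:hermite} entirely, handles (a), (b), and the ``furthermore'' in one stroke, and makes transparent why the threshold $(2y+1)/(y+2)$ appears. The paper's approach, by contrast, explains the qualitative picture (Hermite vectors skip at most one minimal vector) before computing the threshold, and its reliance on Lemma~\ref{lem:hermite} fits the broader narrative of tracking the family $|\cdot|_t$ as $t$ varies; but your single-parameter argument is shorter and cleaner for this particular proposition.
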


\begin{proof} 

1. Let us show that if $u$ is a Hermite vector  and $v$ is not a Hermite vector then $w=u+v$ is a Hermite vector and is the minimal vector that follows $v$. We proceed by contradiction and suppose that $w$ is not a Hermite vector.

Call $h=(h_1,h_2)$ the Hermite vector with $|h_1|<|u_1|$ and $h_2$ non negative and minimal. 
By Lemma \ref{lem:hermite}, there exists a $t>0$ such that $u$ and $h$ are shortest vectors of $\LL$ for the same norm $|.|_t$. Since $v$ is not a Hermite vector, $v$ is not a shortest vector of $\LL$ for the norm $|.|_t$, hence
\[
t^2|u|_t^2=t^4u_1^2+u_2^2=t^2|h|_t^2=t^4h_1^2+h_2^2<t^2|v|_t^2=t^4v_1^2+v_2^2.
\]
It follows that $t^4=\frac{h_2^2-u_2^2}{u_1^2-h_1^2}$ and that $t^4(u_1^2-v_1^2)+u_2^2-v_2^2<0$, hence
\[
\Delta=(h_2^2-u_2^2)(u_1^2-v_1^2)+(u_1^2-h_1^2)(u_2^2-v_2^2)<0.
\]

Let us show that  $|v_1|\leq \tfrac12 |u_1|$ is not possible.
Set $a=\lfloor \tfrac{|u_1|}{|v_1|}\rfloor$. On the one hand, by Proposition \ref{prop:gauss}, $v$ and  $u+av$ are the two minimal vectors that follows $u$, on the other hand, $h$ is a minimal vector that follows $u$ and $h\neq v$, hence $h=u+av$ or $h$ is after $u+av$. In both cases,   $h_2\geq u_2+av_2$. Since $a\geq 2$, we have 
\begin{align*}
\Delta&\geq (4v_2^2+4v_2u_2)\tfrac34 u_1^2+u_1^2(u_2^2-v_2^2)\\
&=2u_1^2v_2^2+3u_1^2u_2v_2+u_1^2u_2^2>0,
\end{align*}
a contradiction.
It follows that $|v_1|> \tfrac12 |u_1|$ and $a=1$. 

Since $a=1$, $w=u+v=(w_1,w_2)$ is a minimal vector and $h\neq w$ because we have assumed that $w$ is not a Hermite vector. 
Now,  $w=(w_1,w_2)$ where $|w_1|=|u_1|-|v_1|$ and $w_2=u_2+v_2$, hence  
\[
h_2\geq w_2+v_2=2v_2+u_2. 
\]
We have $|w|_t>|u|_t$, hence
\begin{align*}
\Delta'&=t^2(|u|_t^2-|w|_t^2)<0. 
\end{align*}
But
\begin{align*}
(u_1^2-h_1^2)\Delta'&=(h_2^2-u_2^2)(u_1^2-w_1^2)+(u_1^2-h_1^2)(u_2^2-w_2^2)\\ &\geq(4v_2^2+4v_2u_2)(2|u_1||v_1|-v_1^2)+u_1^2(-2u_2v_2-v_2^2)\\
&\geq(4v_2^2+4v_2u_2)|u_1||v_1|+u_1^2(-2u_2v_2-v_2^2)\\
&\geq(4v_2^2+4v_2u_2)\tfrac12 u_1^2+u_1^2(-2u_2v_2-v_2^2)\\
&\geq 0
\end{align*}
a contradiction. Finally, by Proposition \ref{prop:gauss}, $w$ is the minimal vector that follows $v$.

2. For every minimal vector $u=(u_1,u_2)$ with $u_2>0$, there exists a Hermite vector $w=(w_1,w_2)$ with $0\leq w_2<u_2$. Indeed, if $\LL$ contains a nonzero vector whose second coordinate vanishes, just take $w=(w_1,0)$ with $0\leq w_1$ minimal. $w$ is a Hermite vector with respect to $|.|_t$ when $t>0$ is small enough. Otherwise there is a vector $x=(x_1,x_2)\in\LL$ with $0<x_2<u_2$. One can find $t>0$ such that 
\[
\frac{1}{t^2}|x_2|^2>|u|_t^2.
\]
A shortest vector $w=(w_1,w_2)$ associated with such a $t$, is by definition a Hermite vector and we have $0<|w_2|<u_2$ because $\tfrac1{t^2}|w_2|\leq |w|_t^2\leq |u|_t^2<\frac{1}{t^2}|x_2|^2$.
 
3. (b) follows immediately from 1 and 2. 

4. Let $u$ and $v$ be two consecutive minimal vectors. By (a), with $r=|u_1|,q=v_2>0$ there exist $0\leq x,y<1$ such that $u=(\eps r,qy)$ and $v=(-\eps rx,q)$. Suppose that $u$ is a Hermite vector and $v$ is not. Let $w=u+v$. By 1, there exists $t>0$ such that 
\[
|u|_t=|w|_t<|v|_t.
\]
As in 1, this implies
\begin{align*}
&t^4=\frac{q^2((1+y)^2-y^2)}{r^2(1-(1-x)^2)},\\
&t^4r^2(1-x^2)+q^2(y^2-1)<0,
\end{align*}
thus
\[
\frac{((1+y)^2-y^2)}{(1-(1-x)^2)}(1-x^2)+(y^2-1)<0.
\]
which is equivalent to
\[
(2y+y^2)x^2+2(1-y^2)x-(2y+1)>0
\]
Solving in $x$, the discriminant is $(1-y^2)^2+(2y+y^2)(2y+1)=(1+y+y^2)^2$, thus we obtain
\[
x>\frac{2y+1}{y+2} \text{ or } x<-\frac1y
\]
and since $x\geq 0$, $x>\frac{2y+1}{y+2}$.

5. Conversely if the inequality $x>\frac{2y+1}{y+2}$ holds then with the value $t^4=\tfrac{q^2((1+y)^2-y^2)}{r^2(1-(1-x)^2)}$, we obtain
\[
|u|_t=|w=u+v|_t<|v|_t
\]
which implies that $v$ is not a Hermite vector. Actually,
if $s>t$ then $|w|_s<|v|_s$ and if $s<t$ then $|u|_s<|v|_s$.

6. It remains to show that if 
\[
x>\frac{2y+1}{y+2},
\]
then $u$ is a Hermite vector. Suppose on the contrary that $u$ is not a Hermite vector. If $v_1=0$, $v$ is a shortest vector of $\LL$ with respect to $|.|_s$ when $s$ is large enough, so that $v_1\neq 0$. Inverting the role of the first and of second coordinate, and using the steps 1 and 2, we see that $u$ or $v$ is a Hermite vector and therefore $u$ is a Hermite vector.
\end{proof}

\section{Proportion and growth rate of Hermite best approximations}

\begin{lemma}
	Let $V=\{(x,y)\in U: x>\tfrac{2y+1}{y+2} \}$. Then
	\[
	\int\int_V\frac{1}{(1+xy)^2}dxdy=\ln 2-\tfrac12\ln3
	\]
\end{lemma}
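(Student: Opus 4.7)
\medskip
\noindent\textbf{Proof plan.} The idea is a direct double-integral computation, using $y$ as the outer variable so that the curve $x=(2y+1)/(y+2)$ becomes a one-sided bound on $x$.

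First I would rewrite
\[
V=\Bigl\{(x,y)\in\R^2:\,0\leq y<1,\ \tfrac{2y+1}{y+2}<x<1\Bigr\},
\]
noting that at $y=0$ the left endpoint is $1/2$ and at $y=1$ it is $1$, so $V$ is a genuine nonempty region in $U$. The inner integral over $x$ is elementary: for $y>0$,
\[
\int_{(2y+1)/(y+2)}^{1}\frac{dx}{(1+xy)^2}=\left[-\frac{1}{y(1+xy)}\right]_{x=(2y+1)/(y+2)}^{x=1}=-\frac{1}{y(1+y)}+\frac{y+2}{2y(y^2+y+1)},
\]
after using $1+y\cdot\tfrac{2y+1}{y+2}=\tfrac{2(y^2+y+1)}{y+2}$. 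The apparent pole at $y=0$ cancels (both terms behave like $1/y$ with opposite signs), which is what one expects since the integrand was bounded on $V$.

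Next I would put the two fractions over a common denominator; a short computation gives
\[
-\frac{1}{y(1+y)}+\frac{y+2}{2y(y^2+y+1)}=\frac{1-y}{2(y+1)(y^2+y+1)}.
\]
Partial fractions yield $\dfrac{1-y}{(y+1)(y^2+y+1)}=\dfrac{2}{y+1}-\dfrac{2y+1}{y^2+y+1}$. Since $(2y+1)=\tfrac{d}{dy}(y^2+y+1)$, each piece integrates to a logarithm, and
\[
\int_0^1\frac{1-y}{2(y+1)(y^2+y+1)}\,dy=\tfrac12\Bigl[2\ln(y+1)-\ln(y^2+y+1)\Bigr]_0^1=\ln 2-\tfrac12\ln 3,
\]
which is the claimed value.

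There is no real obstacle here; the only point demanding minor care is the integrability at $y=0$, which is handled by observing that the combined antiderivative $\tfrac{1-y}{2(y+1)(y^2+y+1)}$ is smooth on $[0,1]$. Everything else is routine partial fractions and logarithm evaluation.
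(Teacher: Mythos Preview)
Your proof is correct and follows exactly the same route as the paper: integrate first in $x$ to obtain $\dfrac{1-y}{2(1+y)(1+y+y^2)}$, then integrate in $y$ via the partial-fraction decomposition leading to the antiderivative $\ln(1+y)-\tfrac12\ln(1+y+y^2)$. Your write-up is in fact slightly more careful than the paper's, since you explicitly check the cancellation of the apparent singularity at $y=0$ and evaluate the endpoints.
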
	

\begin{proof}
	The lemme follows from the two standard calculations:
	\begin{align*}
		\int_{\tfrac{2y+1}{y+2}}^1\frac{1}{(1+xy)^2}dx&=\left[-\frac1y\frac{1}{1+xy}\right]_{\tfrac{2y+1}{y+2}}^1\\
		&=\frac{1-y}{2(1+y)(1+y+y^2)	}
	\end{align*}
and
\begin{align*}
	\int\frac{1-y}{2(1+y)(1+y+y^2)}dy=-2\ln(1+y)+\ln(1+y+y^2).
\end{align*}
\end{proof}
\begin{lemma}
	Let $\LL$ be  a lattice in $\R^2$ and let $X_n=(r_n,q_n)$, $n\in I\subset \Z$, be a complete sequence of minimal vectors  with $q_n\geq 0$ for all $n$. Suppose that $0,1\in I$ and let $(\eps,x,y)$ be the intrinsic coordinates of the pair $(X_0,X_1)$. 
	Then, for all $n\in I$ such that $n+1\in I$, $X_{n+1}$ is not a Hermite vector of $\LL$ iff $T^n(x,y)\in V$.
\end{lemma}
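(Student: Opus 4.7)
The plan is to prove, by induction on $n$, the key intermediate claim that whenever $n,n+1\in I$ the intrinsic coordinates of the consecutive pair $(X_n,X_{n+1})$ are
\[
\bigl((-1)^n\eps,\; T^n(x,y)\bigr),
\]
and then to deduce the Hermite characterization directly from Proposition \ref{prop:hermite}. The base case $n=0$ is the hypothesis of the lemma.

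For the forward step at $n\geq 0$ with $n+2\in I$, the successor $X_{n+2}$ of $X_{n+1}$ exists, which forces the first coordinate of $X_{n+1}$ to be nonzero: a minimal vector of the form $(0,q)$ admits no successor, since any candidate $(w_1,w_2)$ with $w_2>q$ would require $|w_1|<0$. Hence Proposition \ref{prop:gauss}(2) applies to $(X_n,X_{n+1})$, identifies $X_{n+2}$ with $X_n+aX_{n+1}$ (where $a=\lfloor 1/x_n\rfloor$ and $(x_n,y_n)=T^n(x,y)$), and yields the intrinsic coordinates of $(X_{n+1},X_{n+2})$ as $\bigl(-(-1)^n\eps,\,T(T^n(x,y))\bigr)=\bigl((-1)^{n+1}\eps,\,T^{n+1}(x,y)\bigr)$. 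For $n<0$ I would argue by backward induction: knowing the intrinsic coordinates of $(X_{n+1},X_{n+2})$, one recovers those of $(X_n,X_{n+1})$ by applying the inversion $(\eta,p)\mapsto(-\eta,T^{-1}(p))$, with the explicit formula for $T^{-1}$ taken from the invertibility lemma above. Injectivity of $T$ together with the fact that any pair admitting a predecessor in the sequence has intrinsic coordinates in the image of $T$ make this recovery well defined.

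Once the claim is established, the lemma follows immediately. By Proposition \ref{prop:hermite}(a), at least one of $X_n$ and $X_{n+1}$ is a Hermite vector, so $X_{n+1}$ fails to be a Hermite vector iff $X_n$ is Hermite and $X_{n+1}$ is not. By Proposition \ref{prop:hermite}(b) this last condition is equivalent to the intrinsic $(x,y)$-coordinates of the pair $(X_n,X_{n+1})$ satisfying $x>\tfrac{2y+1}{y+2}$, i.e.\ lying in $V$; and by the claim these coordinates are precisely $T^n(x,y)$. The only nontrivial piece of bookkeeping, and the one mild obstacle I anticipate, lies in the backward induction: one must verify case by case that the boundary points of $U$ at which $T^{-1}$ is not defined correspond precisely to pairs with no predecessor in the sequence (e.g.\ $y=0$ forces $X_n$ to have vanishing second coordinate, which can only occur at the bottom of $I$), so that the backward step never breaks down inside $I$.
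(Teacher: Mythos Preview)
Your proposal is correct and follows essentially the same approach as the paper: establish that the intrinsic coordinates of $(X_n,X_{n+1})$ are $((-1)^n\eps,T^n(x,y))$ via Proposition~\ref{prop:gauss}, then apply Proposition~\ref{prop:hermite}. You are in fact more careful than the paper, which asserts the intrinsic-coordinate formula in one line without spelling out the induction (forward or backward) and without explicitly invoking part~(a) of Proposition~\ref{prop:hermite} to pass from ``$X_{n+1}$ is not Hermite'' to ``$X_n$ is Hermite and $X_{n+1}$ is not''.
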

\begin{proof}
	By Proposition \ref{prop:gauss}, for all $n\in I$ such that $n+1\in I$, the intrinsic coordinate of $(X_n,X_{n+1})$ are
	\begin{align*}
		\eps_n=(-1)^n\eps,\, (x_n,y_n)=T^{n}(x,y).
	\end{align*}
By Proposition \ref{prop:hermite}, $X_{n+1}$ is not a Hermite vector iff $T^n(x,y)=(x_n,y_n)\in V$.	
\end{proof}	
	
\begin{lemma}
	For almost all $\ttt\in\R$,
	\[
	\lim_{n\fff\infty} \frac1n \card\{0\leq k<n: X_{k+1}(\ttt) \text{ is a Hermite vector}\}=\frac{\ln 3}{2\ln 2}.
	\]
	where $(X_n(\ttt))_{n\geq 0}$ is the complete sequence of minimal vectors of the lattice $\LL_{\ttt}$.
\end{lemma}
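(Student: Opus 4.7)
The plan is to apply Birkhoff's ergodic theorem for $(T,\mu)$ to the indicator $\mathbf{1}_V$ and then transfer the resulting almost-sure statement from points of $U$ to real numbers $\ttt$. By the preceding lemma, $X_{k+1}(\ttt)$ is a Hermite vector if and only if $T^k(x,y)\notin V$, where $(x,y)$ denotes the intrinsic coordinates of $(X_0(\ttt),X_1(\ttt))$, so
\[
\frac1n\card\{0\leq k<n:X_{k+1}(\ttt)\text{ is Hermite}\}
=1-\frac1n\sum_{k=0}^{n-1}\mathbf{1}_V\bigl(T^k(x,y)\bigr).
\]
The preceding lemma gives $\mu(V)=\tfrac{1}{\ln 2}(\ln 2-\tfrac12\ln 3)=1-\tfrac{\ln 3}{2\ln 2}$, so it suffices to show that the Birkhoff average of $\mathbf{1}_V$ at the appropriate starting point converges to $\mu(V)$.

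For $\ttt\in\R$, the first two vectors of the complete sequence of minimal vectors of $\LL_\ttt$ are $X_0=(1,0)$ and $X_1=(-\ttt',1)$, where $\ttt'$ is the representative of $\ttt\bmod\Z$ in $[-\tfrac12,\tfrac12]$, so the starting intrinsic coordinates are $(x_0,y_0)=(|\ttt'|,0)$. The ``tent'' map $\ttt\mapsto|\ttt'|$ pushes Lebesgue measure on $\R$ forward to an absolutely continuous measure on $]0,\tfrac12]$, so Lebesgue-null subsets of $]0,\tfrac12]$ pull back to Lebesgue-null subsets of $\R$. It is therefore enough to show that for Lebesgue-a.e.\ $x\in]0,\tfrac12]$ the orbit of $(x,0)$ under $T$ is well-defined and $\tfrac1n\sum_{k=0}^{n-1}\mathbf{1}_V\bigl(T^k(x,0)\bigr)\to\mu(V)$.

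The main obstacle is that $]0,1[\times\{0\}$ is a $\mu$-null subset of $U$, so Birkhoff's theorem does not directly describe orbits starting there. I would combine Birkhoff with the Contraction Lemma. By Birkhoff and ergodicity, for any continuous $f:U\fff\R$ there is a $\mu$-full-measure set of $(x,y)\in U$ on which $\tfrac1n\sum_{k=0}^{n-1}f\bigl(T^k(x,y)\bigr)\to\int_U f\,d\mu$; by Fubini, for Lebesgue-a.e.\ $x\in]0,1[$ this holds for some (indeed almost every) $y\in]0,1[$. The Contraction Lemma yields $f(T^n(x,y))-f(T^n(x,0))\to 0$ whenever $T^2(x,0)$ is defined (which holds for a.e.\ $x$), so the same limit $\int f\,d\mu$ is attained starting at $(x,0)$. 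Finally, since $\partial V$ has Lebesgue measure zero in $U$, one can sandwich $\mathbf{1}_V$ between continuous functions $f_-\leq\mathbf{1}_V\leq f_+$ with $\int_U(f_+-f_-)\,d\mu<\eps$, apply the result to $f_\pm$, and let $\eps\to 0$ to obtain the same convergence for $\mathbf{1}_V$. This last approximation step is the only non-routine point; the rest is bookkeeping with intrinsic coordinates and the pushforward $\ttt\mapsto x_0(\ttt)$.
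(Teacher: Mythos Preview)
Your proposal is correct and follows essentially the same route as the paper: identify the starting intrinsic coordinates as $(|\ttt'|,0)$, apply Birkhoff's theorem to the two-dimensional system $(T,\mu)$, and use the Contraction Lemma together with Fubini to transfer the almost-sure convergence from generic points $(x,y)$ to boundary points $(x,0)$. The only cosmetic difference is in handling the discontinuity of $\mathbf 1_V$: the paper thickens $V$ to a set $V_t$ and argues by contradiction, whereas you sandwich $\mathbf 1_V$ between continuous functions---both standard devices yielding the same conclusion.
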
	
\begin{proof}
	Let $\ttt$ be in $\R$. The first two minimal vectors of $\LL_{\ttt}$ are $X_0=(1,0)$ and $X_1=(-\ttt',1)$ where $\ttt'=\ttt-[\ttt]$ and $[\ttt]$ is the integer nearest to $\ttt$. The intrinsic coordinates of these two consecutive minimal vectors   are $(\eps,x,0)=(\sgn\ttt',|\ttt'|,0)$.
	So that, thanks to the previous Lemma,  it is enough to prove that
	\[
	\lim_{n\fff\infty}\frac1n\sum_{k=0}^n 1_V\circ T^k(x,0)=1-\frac{\ln 3}{2\ln 2}
	\]
	for almost all $x\in[0,\tfrac12]$. 
	By Birkhoff Theorem applied to the natural extension of the Gauss map and to the indicator function $1_V$, we know that for almost all $(x,y)\in U$,
	\[
	\lim_{n\fff\infty}\frac1n\sum_{k=0}^n 1_V\circ T^k(x,y)=1-\frac{\ln 3}{2\ln 2}.
	\]
	The problem is that the limit hold for almost all $(x,y)$ and not for almost all $x$.
	
	Suppose on the contrary that there exist $a>0$ and a measurable set $S\subset[0,\tfrac12]$ of positive measure such that for all $x\in S$
	\[
	\limsup_{n\fff\infty}\frac1n\sum_{k=0}^n 1_V(T^k(x,0))\geq1-\frac{\ln 3}{2\ln 2}+a
	\]
	or 
	\[
	\liminf_{n\fff\infty}\frac1n\sum_{k=0}^n 1_V(T^k(x,0))\leq1-\frac{\ln 3}{2\ln 2}-a.
	\]
	We deal with the first case, the second is similar. Let $t$ be positive real number and let 
	\[
	V_t=\{(x,y)\in U:\exists (y',x)\in U, |y-y'|\leq t\}.
	\]  
	We can choose $t$ small enough so that $\mu(V_t)<\mu(V)+\tfrac{a}{2}$.
	By the contraction Lemma, for all $(x,y)\in S\times [0,t]$ and all integers $n\geq 0$, 
	\[
	T^n(x,0)-T^n(x,y)=(0,z_n)
	\]
	with $|z_n|\leq t$.
	It follows that for all $n$ and  all $(x,y)\in S\times [0,t]$,
	\begin{align*}
	\limsup_{n\fff\infty}\frac1n\sum_{k=0}^n 1_{V_t}(T^k(x,y))&\geq \limsup_{n\fff\infty}\frac1n\sum_{k=0}^n 1_V(T^k(x,0))\\
	&\geq \mu(V)+a\\
	&\geq \mu(V_t)+\frac{a}{2},
	\end{align*} 
	Which contradicts Birkhoff Theorem used with the function $1_{V_t}$.
\end{proof}
\begin{proof}[End of proof of Theorem 1]
	(1) in Theorem 1 is the particular case $\LL=\LL_{\ttt}$ of (a) in Proposition \ref{prop:hermite}. (2) is just the above Lemma. Let us now prove (3).
	
	Let $(g_n-h_n\ttt,h_n)_{n\geq 0}$ be the sequence of Hermite vectors in $\LL_{\ttt}$ and $X_n(\ttt)=(p_n-q_n\ttt,q_n)$, $n\geq 0$, be the complete sequence of minimal vectors of $\LL_{\ttt}$. We can suppose that the $q_n$ and $h_n$ are $\geq 0$. By Lemma \ref{lem:HermiteMinimal}, the sequence $(h_n)_{n\geq 0}$ is a sub-sequence of the sequence $(q_n)_{n\geq 0}$. Therefore, there exists an increasing sequence $(n_k)_{k\geq 0}$ such that for all $k\geq 0$, $h_k=q_{n_k}$. By definition,  
	\[
	\{0\leq n<1+n_k: X_k(\ttt) \text{ is a Hermite vector}\}=
		\{n_0,\dots,n_k\},
	\]
	and by the above Lemma, for almost all $\ttt$,
	\[
	\lim_{k\fff\infty}\frac{k+1}{1+n_k}=\frac{\ln 3}{2\ln 2},
	\]
	so that by Levy's Theorem (\cite{Lev}),
	\begin{align*}
		\frac{1}{k+1}\ln h_k&=	\frac{1}{k+1}\ln q_{n_k}\\
		&=\frac{n_k}{k+1}\times\frac{1}{n_k}\ln q_{n_k}	\\
		&\longrightarrow \frac{2\ln 2}{\ln 3}\times \frac{\pi^2}{12 \ln 2}=\frac{\pi^2}{6 \ln 3}
	\end{align*}
	when $k$ goes to infinity.
\end{proof}

\end{document}